\newtheorem{thm}{Theorem}
\newtheorem{prop}{Proposition}
\newtheorem{lem}{Lemma}
\newtheorem{cor}{Corollary}
\theoremstyle{definition}
\newtheorem{defi}{Definition}
\newtheorem{ex}{Example}
\theoremstyle{remark}
\newtheorem{rem}{Remark}
\newcommand{\bbr}{\mathbb{R}}
\newcommand{\bbe}{\mathbb{E}}
\newcommand{\cals}{\mathcal{S}}
\newcommand{\del}{\partial}
\pgfplotsset{compat=1.18}
\let\@fnsymbol\@arabic
\begin{document}
\title{Flatness of location-scale-shape models under the Wasserstein metric}
\author{Ayumu Fukushi\footnote{Department of Mathematical Informatics, The University of Tokyo \& Statistical Mathematics Unit, RIKEN Center for Brain Science, e-mail: \texttt{ayumu.fukushi@riken.jp}}, \ 
Yoshinori Nakanishi-Ohno\footnote{Faculty of Culture and Information Science, Doshisha University, Kyotanabe, e-mail: \texttt{ynakanis@mail.doshisha.ac.jp}}, \ 
	Takeru Matsuda\footnote{Department of Mathematical Informatics, The University of Tokyo \& Statistical Mathematics Unit, RIKEN Center for Brain Science, e-mail: \texttt{matsuda@mist.i.u-tokyo.ac.jp}}}

\date{}

\maketitle

	\begin{abstract} 
    In Wasserstein geometry, one-dimensional location–scale models are flat both intrinsically and extrinsically-that is, they are curvature-free as well as totally geodesic in the space of probability distributions. In this study, we introduce a class of one-dimensional statistical models, termed the location–scale–shape model, which generalizes several distributions used in extreme-value theory. This model has a shape parameter that specifies the tail heaviness. We investigate the Wasserstein geometry of the location-scale-shape model and show that it is intrinsically flat but extrinsically curved.

    \textbf{Keywords:}
Wasserstein geometry, Wasserstein information matrix, location-scale-shape model, displacement interpolation
	\end{abstract}

\section{Introduction}\label{introduction}

In information geometry, a parametric family of probability distributions is called a statistical model. By viewing the parameters as coordinates, one regards the model as a manifold, equips it with various metrics and connections, and studies the resulting geometric properties; this viewpoint has many applications in statistics, machine learning, and related areas. In particular, the geometry induced by the Kullback–Leibler (KL) divergence has been extensively studied, and its close relationship to maximum likelihood estimation is well known  \cite{Amari and Nagaoka}.

The Wasserstein distance is a central notion of distance between probability distributions in statistics and machine learning \cite{Chewi,Peyre,Santambrogio}. It arises from the field of optimal transport, and the associated geometry is known as the Wasserstein geometry \cite{villani2,Villani}. 
The associated metric is also known as Otto metric \cite{Otto}.
In contrast to the KL divergence, the Wasserstein distance is a genuine metric (symmetric and satisfying the triangle inequality) and remains meaningful even when the supports of the distributions differ. Also, the Wasserstein distance reflects the metric structure of the underlying space through transportation cost.
Motivated by the fact that the Fisher information matrix arises as the second-order approximation to the KL divergence, Li and Zhao \cite{Li and Zhao} proposed the Wasserstein information matrix. The intrinsic geometry induced by the Wasserstein information matrix and its statistical implications have been studied recently \cite{Amari and Matsuda2,Ay,Chen and Li,Li and Montufar,Li and Rubio, Li and Zhao}.

Whereas the Wasserstein distance does not admit a closed form in general, it is obtained in closed form in location-scale models \cite{Gelbrich}. 
This is the basis of recent studies on the Wasserstein geometry of location-scale models and  its statistical application \cite{Amari and Matsuda1, Amari and Matsuda2, Matsuda and Strawderman}. 
Geometrically, location-scale models are intrinsically flat as Riemannian manifolds under the metric proposed by \cite{Li and Zhao}.
Moreover, they are totally geodesic with respect to the $\mathrm{L}^2$-Wasserstein distance (i.e., closed under displacement interpolation). 

In this paper, we introduce the class of location-scale-shape models as an extension of the location-scale models and study its Wasserstein geometry. The shape parameter we introduce controls the tail heaviness, which is often the focus of statistical analysis. In particular, the generalized extreme value and generalized Pareto distributions—central to extreme value statistics, which concerns inference on rare events \cite{Haan and Ferreira}—are not location-scale models but are  the location-scale-shape models. 
We show that, as in earlier work on location–scale models, location–scale–shape models are intrinsically flat, however they are not extrinsically flat.

This paper is organized as follows. In Section 2, we give a brief review of the framework by Li and Zhao \cite{Li and Zhao} on the Wasserstein scores and the Wasserstein information matrix. 
In Section 3, we introduce the location-scale-shape model and derive its Wasserstein score functions and the Wasserstein information matrix. In Section 4, we discuss the Wasserstein–geometric properties of the location-scale-shape models. In particular, although the intrinsic curvature induced by the Wasserstein information matrix vanishes, the models is not totally geodesic with respect to the Wasserstein distance, i.e., it is not extrinsically flat when embedded in the Wasserstein space.

\section{Preliminaries}\label{short_review}
\subsection{Wasserstein information matrix}
On a measure space $(\mathfrak{X}, \mathfrak{B}, \nu)$, a family of probability density functions $\mathcal{S}$ with parameter space $\Theta$, an open subset of $\mathbb{R}^n$, given by
\begin{equation*}
  \mathcal{S}=\lbrace p(x;\theta)\mid \theta \in \Theta \rbrace,
\end{equation*}
is called a statistical model. In information geometry, a statistical model is regarded as a manifold with $\Theta$ as a system of local coordinates, and one studies the relationship between geometric properties and the statistical properties of models and estimators. 

Let $(\mathfrak{X},d)$ be a complete separable metric space and let $p\ge 1$. Denote by $\mathcal{P}_p(\mathfrak{X})$ the set of all Borel probability measures on $(\mathfrak{X},d)$ with finite $p$-th moment:
\begin{align}
  \mathcal{P}_p(\mathfrak{X})
  =\bigl\{\pi:\text{Borel probability measure on }(\mathfrak{X},d)\ \bigm|\ \int_{\mathfrak{X}} d(x,x_0)^p\,\dd\mu(x)<\infty\,\bigr\}.
\end{align}
We define the $L^p$-Wasserstein distance $W_p(\mu_1,\mu_2)$ on $\mathcal{P}_p(\mathfrak{X})$ by
\begin{align}\label{Wasserstein_distance}
W_p(\mu_1,\mu_2)^p:=\inf_{\pi\in\Pi(\mu_1,\mu_2)}\int_{\mathfrak{X}\times\mathfrak{X}} d(x,y)^p\dd\pi(x,y),
\end{align}
where
\begin{align*}
  \Pi(\mu_1,\mu_2)=\Bigl\{\, \pi \in \mathcal{P}_k(\mathfrak{X} \times \mathfrak{X}) \ \Bigm|\ 
  \pi(A \times \mathfrak{X}) = \mu_1(A),\ \pi(\mathfrak{X} \times B) = \mu_2(B)\ (A,B \in \mathcal{B}(\mathfrak{X})) \Bigr\}.
\end{align*}

In what follows, we focus on the space $\mathcal{P}_2(\mathbb{R}^d)$, taking $\mathfrak{X}=\bbr^d$, $p=2$, and $d$ to be the Euclidean distance on $\bbr^d$.
Li and Zhao \cite{Li and Zhao} introduced analogues of the Fisher score function
and Fisher information matrix 
on a statistical model $\mathcal{S}\subset\mathcal{P}_2(\mathbb{R}^d)$ under the $\mathrm{L}^2$-Wasserstein distance.
The Wasserstein score functions $\Phi^W_i(x;\theta)$, $i=1,\dots,n$, are defined as the solutions to the Poisson-type partial differential equation
\begin{align}
    \nabla_x \cdot \bigl(p(x;\theta)\,\nabla_x \Phi^W_i(x;\theta)\bigr)&=-\frac{\partial}{\partial \theta_i} p(x;\theta), \label{conti_eq}  \\
    \mathbb{E}_\theta[\Phi^W_i(X;\theta)]&=0, \label{normal_condi}
\end{align}
where $\nabla_x$ denotes the gradient with respect to $x$ and $\nabla_x \cdot$ denotes the divergence operator.
Note that equation~\eqref{conti_eq} is the parametric analogue of the continuity equation which, in optimal transport theory, characterizes geodesics in the $\mathcal{P}_2(\mathbb{R}^d)$.
In a related direction, \cite{Chen and Li} introduced Wasserstein score functions for the case where $\mathfrak{X}$ is a discrete space equipped with a undirected graph structure.
The Wasserstein score functions are analogous to the Fisher score functions $\frac{\del \log p(x;\theta)}{\del\theta^i}$. We denote the Fisher score functions by $\Phi^F_i(x;\theta)$.

We define the Wasserstein information matrix $I^W(\theta)$ by
\begin{align}\label{def_w_inf}
I^W(\theta)_{ij}
=\int \Phi^W_i(x;\theta)\,\frac{\del p(x;\theta)}{\del\theta^j}\, \dd x
=\bbe_\theta\!\left[\nabla_x\Phi^W_i(x;\theta)^\top \nabla_x\Phi^W_j(x;\theta)\right].
\end{align}
Same as the Fisher metric induced by the Fisher information matrix, the Wasserstein information matrix induces a Riemannian metric on $\cals$. We call this the Wasserstein metric and denote it by $g_W$. This metric is also known as the Otto metric \cite{Otto}.
The Wasserstein information matrix is analogous to the Fisher information matrix $I^F(\theta)_{ij}=\int \Phi^F_i(x;\theta)\frac{\del p(x;\theta)}{\del\theta^j}\dd x$.

The Wasserstein information matrix provides a second-order approximation of the $\mathrm{L}^2$-Wasserstein distance:
\begin{align*}
  W_2(p_\theta, p_{\theta+\Delta\theta})
  = \tfrac{1}{2}\,\Delta\theta^\top I^W(\theta)\,\Delta\theta + o(\|\Delta\theta\|^2).
\end{align*}
This parallels the classical fact that the Fisher information matrix is the second-order approximation of the Kullback–Leibler divergence. 
Li and Zhao \cite{Li and Zhao} also introduced an analogue of the maximum likelihood estimator and a new covariance notion; they derived a Wasserstein-–Cramér-–Rao inequality showing that the inverse of $I^W(\theta)$ gives a lower bound on the estimation variance. We omit the details here.

\subsection{Location-scale model}
Let $f$ be a probability density function on $\bbr$, and let $\theta=(\mu,\sigma)$ with $\mu\in\bbr$ and $\sigma>0$. The two-para statistical model
\begin{align*}
  p(x;\theta)=\frac{1}{\sigma}f\!\left(\frac{x-\mu}{\sigma}\right)
\end{align*}
is called the location-scale model, and we write $\mathcal{M}_f=\qty{p(x;\theta)\mid \theta\in\bbr\times\bbr_{>0}}$.
Here, $\mu$ is the location parameter and $\sigma$ is the scale parameter. 

For location-scale models, the Wasserstein distance \eqref{Wasserstein_distance} admits a closed form; moreover, they are totally geodesic in $\mathcal P_2(\bbr)$ with respect to the $\mathrm{L}^2$-Wasserstein distance (i.e., closed under displacement interpolation) \cite{Gelbrich}. 

In the location-scale family, the Wasserstein score functions are
\begin{align}
    \Phi_\mu^W(x ; \theta) &= x - \mathbb{E}_\theta[X], \label{score_mu} \\
    \Phi_\sigma^W(x ; \theta) &= \frac{(x - \mu)^2}{2\sigma} - \frac{V_\theta[X] + (\mathbb{E}_\theta[X] - \mu)^2}{2\sigma},\label{score_sigma} 
\end{align}
and the Wasserstein information matrix $I^W(\theta)$ is
\begin{align}
  I^W(\theta)=\begin{pmatrix}
    1 & {\mathbb{E}_\theta[X]-\mu\over\sigma}\\
    {\mathbb{E}_\theta[X]-\mu\over\sigma} & {V_\theta[X]\over\sigma^2}
  \end{pmatrix}.\label{w_mat_lc}
\end{align}
Note that if $f$ has mean $0$ and variance $1$, then \eqref{w_mat_lc} is the identity matrix. For a general $f$, the reparametrization $\omega=(\mu+\sigma m,\ \sigma s)$ brings the model to mean $0$ and variance $1$ at $\omega=(0,1)$, where
\begin{align}
  m&:=\bbe_{(0,1)}[X],\\
  s^2&:=V_{(0,1)}[X].
\end{align}
Since \eqref{w_mat_lc} can be made the identity matrix, the Riemannian curvature associated with the Wasserstein metric $g_W$ vanishes over the model $\mathcal{M}_f$; it means $\mathcal{M}_f$ is intrinsically flat as a Riemannian manifold $(\mathcal{M}_f,g_W)$.

\section{Location-scale-shape models}\label{Location-scale-shape models}

As an extension of the location-scale model, we introduce a location-scale-shape model that has one-dimensional location. scale, and shape parameter. 

\begin{defi}(location-scale-shape model)
  Let $\Xi\subset\mathbb{R}$ be an open set containing $0$, and let $f\in C^4(\bbr)$ be a probability density function on $\bbr$.
The three-parameter statistical model 
\[
\mathcal{S}_f=\{p(x;\theta) \mid \theta\in\mathbb{R}\times\mathbb{R}_{>0}\times\Xi\}
\]
is called the \emph{location-scale-shape model} generated by $f$ if the probability density function $p(x;\theta)$ is written as
    \begin{align*}
      p(x;\theta) = \begin{cases} 
\frac{1}{\sigma}\left(1 + \xi \frac{x - \mu}{\sigma}\right)^{-1}f\left(\frac{1}{\xi}\log\left( 1 + \xi \frac{x - \mu}{\sigma} \right)\right), & \text{if } \xi \neq 0, \\
\frac{1}{\sigma}f\left(\frac{x-\mu}{\sigma}\right), & \text{if } \xi = 0.
\end{cases}
\end{align*}
As in the location-scale model, we call $\mu$ the location parameter and $\sigma$ the scale parameter. Also, we call $\xi$ the shape parameter.
\end{defi}

Since
\begin{align*}
\text{supp}\ p_{\theta}=\Bigl\{x\mid 1 + \xi \frac{x - \mu}{\sigma}\geq0,\ \frac{1}{\xi}\log\Bigl( 1 + \xi \frac{x - \mu}{\sigma} \Bigr)\in \text{supp}\ f \Bigr\},
\end{align*}
the support of the distribution strongly depends on the parameters. Note that we require $f$ to be of class $C^4(\bbr)$ for theoretical reasons. This assumption is minimal for the present framework and one has $f\in C^\infty(\bbr)$ in many important examples.

  The shape parameter specifies the shape of the probability density function $f(x)$. 
  As that shape varies, the associated location and scale also vary accordingly. Note that this framework does not contain all types of  "shape parameters" in statistics. 
  In Examples 1 and 2 below, we show figures in which only the shape parameter is varied.
  Moreover, for each fixed $\xi\in\Xi$, define
  \begin{align*}
    f_\xi(x)=\begin{cases} 
(1 + \xi x)^{-1}f\qty({1\over\xi}\log\qty( 1 + \xi x)), & \text{if } \xi \neq 0, \\
f(x), & \text{if } \xi = 0.
\end{cases}
  \end{align*}
  This determines a location-scale model $\mathcal{M}_{f_\xi}$. In other words, a location-scale-shape model is a statistical model that continuously deforms location-scale models. Geometrically, the family $\{\mathcal{M}_{f_\xi}\}_{\xi\in \Xi}$ endows $\mathcal{S}_f$ with a foliation $\mathcal{S}_f=\bigsqcup_\xi \mathcal{M}_{f_\xi}$.

Below, we present two examples of the location-scale-shape model. These models play a central role in extreme value theory. Other examples include the generalized normal and generalized logistic distributions introduced in \cite{Hosking}; these are used in hydrology (see, e.g., \cite{Atiem, Das, Hosking}).

\begin{ex}
  (Generalized Extreme Value Distribution)\\
  A probability distribution with density
    \[
p(x; \theta) = 
\begin{cases} 
\frac{1}{\sigma} \left(1+\xi \frac{x - \mu}{\sigma}\right)^{-{1\over\xi}-1} \exp \left( -\left( 1 + \xi  \frac{x - \mu}{\sigma} \right)^{-{1\over\xi}} \right), & \text{if } \xi \neq 0, \\
\frac{1}{\sigma} \exp \left( -\left( \frac{x - \mu}{\sigma} \right) \right) \exp \left( -\exp \left( -\left( \frac{x - \mu}{\sigma} \right) \right) \right), & \text{if } \xi = 0
\end{cases}
\]
 is called the generalized extreme value distribution $\mathrm{GEV}(\mu, \sigma, \xi)$.
  This distribution is a location–scale–shape model generated by the standard Gumbel distribution,
\begin{align*}
  f(z)=\exp(-\exp(-z)).
\end{align*}As will be shown later, the moment generating function $f$ plays an important role in the location-scale-shape model; the moment generating function of the standard Gumbel distribution is given by
  \begin{align*}
    M_f(t)=\Gamma(1-t), \quad t<1,
  \end{align*}
  where $\Gamma(x)$ denotes the Gamma function.
  The support of $\mathrm{GEV}(\mu, \sigma, \xi)$ is given by\begin{align*}
\begin{cases}
  x\in[\mu-{\sigma\over\xi},\infty) , & \text{if } \xi > 0\\
  x\in[-\infty,\infty) , & \text{if } \xi = 0\\
  x\in(-\infty,\mu-{\sigma\over\xi}] , & \text{if } \xi < 0.
\end{cases}
\end{align*}
Figure \ref{fig:Densities of GEV} shows how the density function changes as $\xi$ varies for $\mathrm{GEV}(0,1,\xi)$.
  \begin{figure}[h]
    \centering
    \includegraphics[keepaspectratio, scale=0.5]{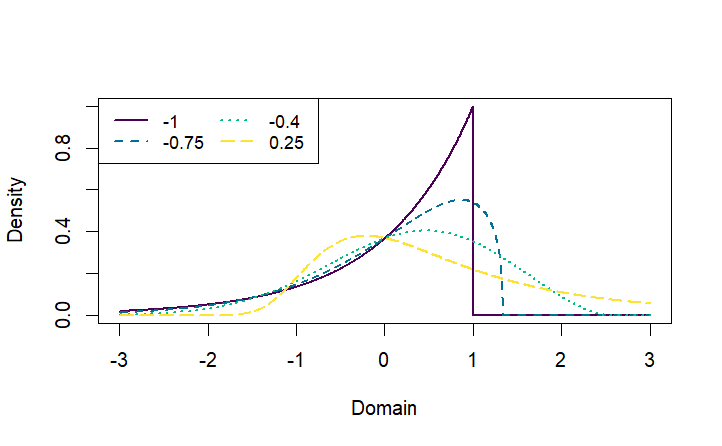}
    \caption{Densities of GEV.}
    \label{fig:Densities of GEV}
  \end{figure}
\end{ex}

\begin{ex}
  (Generalized Pareto Distribution)\\
  A probability distribution with density
  \[
p(x; \theta) = 
\begin{cases} 
\frac{1}{\sigma} \left( 1 + \xi \frac{x - \mu}{\sigma}  \right)^{-{1\over\xi} - 1}, & \text{if } \xi \neq 0, \\
\frac{1}{\sigma} \exp \left( -\left( \frac{x - \mu}{\sigma} \right) \right), & \text{if } \xi = 0.
\end{cases}
\]
is called the generalized Pareto distribution $\mathrm{GPD}(\mu, \sigma, \xi)$.
  This distribution is a location–scale–shape model generated by  the standard exponential distribution,
  \begin{align*}
    f(z)= \exp(-z).
  \end{align*}The moment generating function of the standard exponential distribution is given by
  \begin{align*}
    M_f(t)=\frac{1}{1-t}, \quad t<1.
  \end{align*}
  The support of $\mathrm{GPD}(\mu, \sigma, \xi)$ is given by\begin{align*}
\begin{cases}
  x\in[\mu,\infty) , & \text{if } \xi \geq 0,\\
  x\in[\mu,\mu-{\sigma\over\xi}] , & \text{if } \xi < 0.
\end{cases}
\end{align*}
  Figure \ref{fig:Densities of GPD} shows how the density function changes as $\xi$ varies for $\mathrm{GPD}(0,1,\xi)$. Note that $\mathrm{GPD}(\mu, \sigma, \xi)$ is uniform distribution when $\xi=-1$.
  \begin{figure}[h]
    \centering
    \includegraphics[keepaspectratio, scale=0.5]{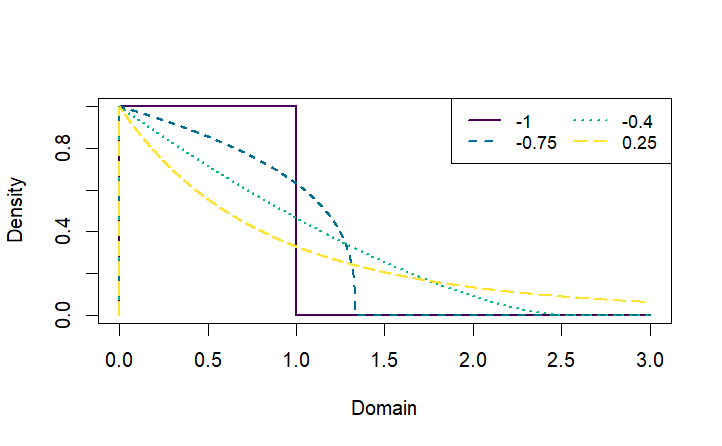}
    \caption{Densities of GPD.}
    \label{fig:Densities of GPD}
  \end{figure}
\end{ex}

The following propositions demonstrate that the shape parameter $\xi$ controls the tail heaviness of the distribution.
We denote by $F$ the cumulative distribution function of $f$, and by $P_\theta(x)$ the cumulative distribution function of $p(x;\theta)$.

\begin{prop}\label{prop_cum}
  $P_\theta(x)$ is nonincreasing in $\xi$ for fixed $x\in\bbr,\mu\in\bbr$, and $\sigma>0$ .
\end{prop}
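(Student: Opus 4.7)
The plan is first to derive a closed form for the cumulative distribution function $P_\theta(x)$ and then to reduce the monotonicity statement to a one-variable inequality about $\log(1+\,\cdot\,)$. For $\xi\neq 0$ and $1+\xi(x-\mu)/\sigma>0$, I apply the change of variables $z=\frac{1}{\xi}\log\!\bigl(1+\xi(y-\mu)/\sigma\bigr)$ inside $\int_{-\infty}^{x}p(y;\theta)\,\dd y$. The Jacobian $\dd y=\sigma\bigl(1+\xi(y-\mu)/\sigma\bigr)\,\dd z$ exactly cancels the prefactor $\sigma^{-1}\bigl(1+\xi(y-\mu)/\sigma\bigr)^{-1}$ in the density, so $p(y;\theta)\,\dd y=f(z)\,\dd z$. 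Tracking the endpoints for either sign of $\xi$ (and noting that $z$ is monotone in $y$ in both cases) yields
\[
  P_\theta(x)=F\!\Bigl(\tfrac{1}{\xi}\log\bigl(1+\xi(x-\mu)/\sigma\bigr)\Bigr),
\]
with the natural extension $P_\theta(x)=0$ or $1$ when $x$ lies outside the support; the case $\xi=0$ recovers $F((x-\mu)/\sigma)$ by continuity.

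Since $F$ is nondecreasing, it suffices to show that
\[
  z(\xi)\;:=\;\frac{1}{\xi}\log\!\Bigl(1+\xi\,\tfrac{x-\mu}{\sigma}\Bigr)
\]
is nonincreasing in $\xi$ wherever defined. Setting $u=(x-\mu)/\sigma$ and differentiating gives
\[
  \frac{\dd z}{\dd\xi}=\frac{1}{\xi^{2}}\Bigl[\frac{\xi u}{1+\xi u}-\log(1+\xi u)\Bigr].
\]
The key one-variable inequality is $\phi(s):=s/(1+s)-\log(1+s)\le 0$ for $s>-1$, which follows at once from $\phi(0)=0$ and $\phi'(s)=-s/(1+s)^{2}$, making $s=0$ a global maximum. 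Applying this with $s=\xi u$ yields $\dd z/\dd\xi\le 0$.

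The only mildly delicate step is the boundary analysis: as $\xi$ crosses the critical value at which $1+\xi(x-\mu)/\sigma=0$, one must check that the closed-form expression $F(z(\xi))$ tends to the correct limiting value so that $P_\theta(x)$ remains nonincreasing across the transition. This reduces to observing that $z(\xi)\to+\infty$ as $\xi u\to -1^{+}$ with $\xi<0$, and $z(\xi)\to-\infty$ as $\xi u\to -1^{+}$ with $\xi>0$, both of which are immediate from the explicit formula.
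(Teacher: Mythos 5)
Your proof is correct and follows essentially the same route as the paper's: write $P_\theta(x)=F\bigl(\tfrac{1}{\xi}\log(1+\xi(x-\mu)/\sigma)\bigr)$, differentiate in $\xi$, and reduce to the inequality $\tfrac{s}{1+s}-\log(1+s)\le 0$ for $s>-1$. Your explicit check of the limits of $z(\xi)$ as $1+\xi u\to 0^{+}$ is a welcome extra step the paper glosses over, since it is what guarantees monotonicity across the support boundary rather than merely on each piece.
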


\begin{proof}
Note that $P_\theta(x)$ is given by
\begin{align*}
  P_\theta(x) = \begin{cases} 
  F\qty(\log\qty( 1 + \xi  \frac{x - \mu}{\sigma} )^{1\over\xi}), & \text{if } \xi \neq 0, \\
  F\!\left(\frac{x-\mu}{\sigma}\right), & \text{if } \xi = 0,
\end{cases}
\end{align*}
for $x\in\text{supp}\, p_{\theta}$. Let $s={x-\mu\over\sigma}$.
If $\xi$ satisfies $1+\xi s>0$, we have
\begin{align*}
  \pdv{\xi}P_\theta(x)
  &= \pdv{\xi}F\qty(\log (1+\xi s)^{1\over\xi})\\
  &= \frac{1}{\xi^2}\,F'\qty(\log (1+\xi s)^{1\over\xi})\qty(\frac{\xi s}{1+\xi s}-\log (1+\xi s))<0,
\end{align*}
where $F'$ denotes the derivative of $F$, and the final inequality holds because
cumulative distribution functions are nondecreasing and $\frac{y}{1+y}-\log(1+y)<0$ for $y>-1$.

If $\xi$ satisfies $1+\xi s<0$, it follows that $x\notin \text{supp}\, p_{\theta}$, and hence $\pdv{\xi}P_\theta(x)=0$.

\end{proof}

\begin{prop}\label{prop_rate}
  Let  $\bar P_\theta(x)=1-P_\theta(x)$. Then, $\xi_1>\xi_2>0$ implies $\bar P_{\theta_2}(x)= O(\bar P_{\theta_1}(x)) \ (x\to\infty)$ where $\theta_i=(\mu_i, \sigma_i, \xi_i) $ for $ i=1,2$.
\end{prop}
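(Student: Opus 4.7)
The plan is to reduce the problem to the asymptotics of the single function $\bar F(y)$ along two diverging sequences, using the explicit expression for $P_\theta$ that already appeared in the proof of Proposition \ref{prop_cum}.

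First I would use the formula
\[
  \bar P_\theta(x) = \bar F\!\left(\frac{1}{\xi}\log\!\left(1 + \xi\,\frac{x-\mu}{\sigma}\right)\right), \qquad \bar F := 1-F,
\]
which holds for $\xi > 0$ and all sufficiently large $x$ (the support is unbounded above when $\xi>0$). Writing
\[
  y_i(x) := \frac{1}{\xi_i}\log\!\left(1 + \xi_i\,\frac{x-\mu_i}{\sigma_i}\right),\qquad i=1,2,
\]
the goal becomes showing that $\bar F(y_2(x))/\bar F(y_1(x))$ is bounded as $x\to\infty$.

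Next I would extract the dominant term of $y_i(x)$ by factoring the logarithm: as $x\to\infty$,
\[
  y_i(x) \;=\; \frac{\log x}{\xi_i} \;+\; \frac{1}{\xi_i}\log\!\frac{\xi_i}{\sigma_i} \;+\; o(1).
\]
Since $\xi_1>\xi_2>0$ implies $1/\xi_2 > 1/\xi_1$, this gives
\[
  y_2(x)-y_1(x) \;=\; \left(\frac{1}{\xi_2}-\frac{1}{\xi_1}\right)\log x + O(1) \;\longrightarrow\; +\infty.
\]
In particular, there exists $x_0$ such that $y_2(x) \ge y_1(x)$ for all $x\ge x_0$.

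Finally, since $F$ is a cumulative distribution function, $\bar F$ is nonincreasing, so $y_2(x)\ge y_1(x)$ yields $\bar F(y_2(x))\le \bar F(y_1(x))$, hence $\bar P_{\theta_2}(x) \le \bar P_{\theta_1}(x)$ for all $x\ge x_0$. This immediately gives $\bar P_{\theta_2}(x)=O(\bar P_{\theta_1}(x))$ with implicit constant $1$. There is no real obstacle; the only delicate point is verifying that the dependence on the differing location and scale parameters $(\mu_i,\sigma_i)$ is absorbed into the $O(1)$ correction and cannot reverse the inequality $1/\xi_2 > 1/\xi_1$, which it cannot because the $\log x$ term dominates.
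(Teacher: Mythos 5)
Your proof is correct and follows essentially the same route as the paper's: both compare the arguments $\frac{1}{\xi_i}\log\bigl(1+\xi_i\frac{x-\mu_i}{\sigma_i}\bigr)$, observe that their difference is $\bigl(\frac{1}{\xi_2}-\frac{1}{\xi_1}\bigr)\log x+O(1)\to+\infty$, and conclude $\bar P_{\theta_2}(x)\le\bar P_{\theta_1}(x)$ for large $x$ from the monotonicity of $\bar F$. Your explicit remark that the implicit constant is $1$ and that the $(\mu_i,\sigma_i)$ dependence is absorbed into the $O(1)$ term is a slight sharpening of the paper's presentation, but the argument is the same.
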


\begin{rem}
  Similarly, $\xi_1<\xi_2<0$ implies $P_{\theta_2}(x)= O(P_{\theta_1}(x)) \ (x\to-\infty)$. Therefore, a positive $\xi$ corresponds to the right tail heavier, while a negative $\xi$ corresponds to the left tail heavier.
\end{rem}

\begin{proof}
 Let $\bar F(x):= 1- F(x)$ , which is a nonincreasing function. For $\xi_1>\xi_2>0$, we have
\begin{align*}
  \log\qty( 1 + \xi_1  \frac{x - \mu_1}{\sigma_1} )^{1\over\xi_1}-\log\qty( 1 + \xi_2  \frac{x - \mu_2}{\sigma_2} )^{1\over\xi_2}
  =\Bigl(\frac{1}{\xi_1}-\frac{1}{\xi_2}\Bigr)\log x+ O(1)\to-\infty \quad (x\to\infty).
\end{align*}
This implies that for sufficiently large $x$, we have
$\bar P_{\theta_2}(x)\leq \bar P_{\theta_1}(x)$. 
As both $\bar P_{\theta_1}(x)$ and $\bar P_{\theta_2}(x)$ tend to $0$, this inequality implies
$\bar P_{\theta_2}(x)= O\!\bigl(\bar P_{\theta_1}(x)\bigr)$ as $x\to\infty$.
\end{proof}

The following lemma simplifies the computation of moments.

\begin{lem}\label{lem_rv}
Let $\mathcal{S}_f=\{p(x;\theta)\mid\theta\in\mathbb{R}\times\mathbb{R}_{>0}\times\Xi\}$, $Z\sim f(z)$ and $X\sim p(x;\theta)$. Then for $\xi\neq0$, the following holds:
\begin{align}\label{law_of_Z}
X\overset{d}{=}&\frac{\sigma}{\xi}(\exp(\xi Z)-1)+\mu, 
\end{align}
Here, $\overset{d}{=}$ denotes that the random variables have the same distribution.
\end{lem}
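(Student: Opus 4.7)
The plan is to verify the distributional identity directly by applying the change-of-variables formula to the smooth transformation $T(z) := \frac{\sigma}{\xi}(\exp(\xi z) - 1) + \mu$. First I would check that $T$ is a strictly increasing bijection from $\mathbb{R}$ onto the open set $\{x : 1 + \xi(x-\mu)/\sigma > 0\}$, which contains the support of $p_\theta$. Since $T'(z) = \sigma \exp(\xi z) > 0$ for $\sigma > 0$, monotonicity holds regardless of the sign of $\xi \neq 0$. A short calculation gives the inverse $T^{-1}(x) = \frac{1}{\xi}\log(1 + \xi(x-\mu)/\sigma)$, which is precisely the argument appearing inside $f$ in the definition of $p(x;\theta)$.

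Next I would apply the one-dimensional change-of-variables formula: if $Z \sim f$, the density of $T(Z)$ on the image of $T$ equals $f(T^{-1}(x)) \cdot (T^{-1})'(x)$. A direct differentiation yields $(T^{-1})'(x) = \frac{1}{\sigma}\bigl(1 + \xi(x-\mu)/\sigma\bigr)^{-1}$, and substituting this back recovers exactly the defining density of the location-scale-shape model. An entirely equivalent route would be to compare cumulative distribution functions: $\mathbb{P}(T(Z) \leq x) = F(T^{-1}(x))$, which matches the expression for $P_\theta(x)$ that was already used (without proof) at the beginning of the proof of Proposition~\ref{prop_cum}; in fact, this lemma supplies the missing derivation of that formula.

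There is no substantive obstacle here; the argument is essentially bookkeeping in the change-of-variables formula, and the hardest part is only to keep the support conditions straight for the two cases $\xi > 0$ and $\xi < 0$. The key observation that streamlines everything is that $T'(z) > 0$ uniformly in the sign of $\xi$, so the case split does not affect the density computation and a single unified calculation suffices.
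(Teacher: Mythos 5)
Your proposal is correct and follows essentially the same route as the paper's proof: both apply the one-dimensional change-of-variables formula to the map $z\mapsto \frac{\sigma}{\xi}(\exp(\xi z)-1)+\mu$, compute its inverse and the Jacobian $\frac{1}{\sigma}\bigl(1+\xi\frac{x-\mu}{\sigma}\bigr)^{-1}$, and match the result to the defining density of $\mathcal{S}_f$. Your additional checks on monotonicity and the support are sound but not a different method.
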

\begin{proof}
Let $g(z):=\frac{\sigma}{\xi}(\exp(\xi z)-1)+\mu$ and set $W:=g(Z)$. We will show that $X\overset{d}{=}W$. Noting that the mapping $z\mapsto g(z)$ is one-to-one, we have
\begin{align*}
z=&\frac{1}{\xi}\log\left( 1+\xi\frac{w-\mu}{\sigma} \right),\\
\left|\frac{dz}{dw}\right|=&\frac{1}{\sigma}\left(1+\xi\frac{w-\mu}{\sigma}\right)^{-1}.
\end{align*}
Hence, the probability density function $q(w)$ of $W$ is given by
\begin{align*}
q(w)=\frac{1}{\sigma}\left(1+\xi\frac{w-\mu}{\sigma}\right)^{-1}f\left(\frac{1}{\xi}\log\left( 1+\xi\frac{w-\mu}{\sigma} \right)\right).
\end{align*}
This coincides with $p(x;\theta)$. Therefore $X\overset{d}{=}W$.
\end{proof}

From Lemma 1, the calculation of moments of $X$ reduces to the computation of the moment generating function of $Z$. For a set $A\subset\mathbb{R}$ and $r\in\mathbb{R}$, we denote\begin{align*}
  rA:=\qty{ra\mid a\in A}
  \end{align*}
  and for $f$, we denote the moment generating function 
  \begin{align*}
  M_f(t):=\int_\bbr e^{tz}f(z)\dd z.
\end{align*}
Then, from Lemma \ref{lem_rv}, we obtain
\begin{align}
  \mathbb{E}_\theta[X] &= \frac{\sigma}{\xi}(M_f(\xi)-1)+\mu,\\
  V_\theta[X] &= \frac{\sigma^2}{\xi^2}(M_f(2\xi)-M_f(\xi)^2).
\end{align}
For computational convenience, we often use the random variable $T=1+\xi\frac{X-\mu}{\sigma}$.  
Again from Lemma \ref{lem_rv}, we have
\begin{align}\label{law_of_T}
T 
&\overset{d}{=} 1+\frac{\xi}{\sigma}\left(\frac{\sigma}{\xi}(\exp(\xi Z)-1)+\mu-\mu\right)
= \exp(\xi Z).
\end{align}
The moments of $T$ satisfy the following relation.

\begin{cor}\label{cor_mom}
Let $k\in\mathbb{N}$ and $r\ge 0$, and assume $\xi\neq 0$. If the moment generating function $M_f(t)$ is defined and $k$ times differentiable on $r\Xi$, then
\begin{align}\label{mom_of_T}
\mathbb{E}_\theta[T^r(\log T)^k]=\xi^k M_f^{(k)}(r\xi).
\end{align}
Here, $M_f^{(k)}(t)$ denotes the $k$th derivative of $M_f(t)$.
\end{cor}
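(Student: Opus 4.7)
The plan is to reduce the computation to a straightforward identification of a derivative of the moment generating function. The single substantive input has already been done: by \eqref{law_of_T} we know that under $p(x;\theta)$ the auxiliary random variable $T$ satisfies $T \overset{d}{=} \exp(\xi Z)$ with $Z \sim f$. This substitution is what I will exploit; everything else is bookkeeping together with one standard analytic justification.

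First I would substitute $T = \exp(\xi Z)$ pointwise in distribution. Then $T^r = \exp(r\xi Z)$ and $\log T = \xi Z$, so
\begin{align*}
T^r (\log T)^k \overset{d}{=} \xi^k\, Z^k \exp(r\xi Z).
\end{align*}
Taking expectations gives
\begin{align*}
\mathbb{E}_\theta\!\left[T^r(\log T)^k\right] = \xi^k\, \mathbb{E}\!\left[Z^k \exp(r\xi Z)\right].
\end{align*}
The remaining task is to recognize the right-hand factor as the $k$th derivative of $M_f$ evaluated at $t = r\xi$, namely $M_f^{(k)}(r\xi) = \mathbb{E}[Z^k e^{r\xi Z}]$. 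Since $r\xi \in r\Xi$ and $0 \in \Xi$, this point lies in the open set on which $M_f$ is assumed $k$ times differentiable, so the identification is legitimate and yields \eqref{mom_of_T}.

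The one point that is not purely formal is the exchange of differentiation and integration used to write $M_f^{(k)}(t) = \int z^k e^{tz} f(z)\,\mathrm{d}z$. This is the step I expect to be the main (and only) technical obstacle. I would handle it by the standard argument: whenever $M_f$ is finite on an open interval containing $t$, it is in fact analytic there, and derivatives of all orders may be computed by differentiation under the integral sign, since $\int |z|^k e^{tz} f(z)\,\mathrm{d}z$ is finite on any compact sub-interval by a simple domination of $|z|^k e^{tz}$ by $e^{(t+\varepsilon)z}+e^{(t-\varepsilon)z}$ for small $\varepsilon>0$. Under the hypothesis of the corollary this applies at $t = r\xi$, which closes the argument.
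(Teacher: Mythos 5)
Your argument is correct and follows essentially the same route as the paper: substitute $T\overset{d}{=}\exp(\xi Z)$ from \eqref{law_of_T}, pull out $\xi^k$, and identify $\mathbb{E}[Z^k e^{r\xi Z}]$ with $M_f^{(k)}(r\xi)$. The only difference is that you additionally justify differentiating under the integral sign, which the paper leaves implicit in its hypothesis that $M_f$ is $k$ times differentiable on $r\Xi$.
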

\begin{proof}
As a consequence of (\ref{law_of_T}) , we obtain
\begin{align*}
\mathbb{E}_\theta[T^r (\log T)^k]=&\mathbb{E}[\exp(r\xi Z)(\xi Z)^k]\\
=&\xi^k\mathbb{E}[Z^k\exp(r\xi Z)]\\
=&\xi^k M_f^{(k)}(r\xi).
\end{align*}
\end{proof}

We derive the Wasserstein score function in location-scale-shape models.

\begin{thm}\label{thm_wscore_functions}
In a location-scale-shape model $\mathcal{S}_f$, suppose 
$M_f(t)$ exists on $2\Xi$. Then the Wasserstein score function for the parameter $\mu$ is given by
\begin{align}\label{wscore_of_mu}
  \Phi_\mu^W(x ; \theta) &= x - \mathbb{E}_\theta[X], 
\end{align}
the Wasserstein score function for $\sigma$ is 
\begin{align}
  \Phi_\sigma^W(x ; \theta) &= \frac{(x - \mu)^2}{2\sigma} - \frac{V_\theta[X] + (\mathbb{E}_\theta[X] - \mu)^2}{2\sigma}, \label{wscore_of_sigma}
\end{align}
and the Wasserstein score function for  $\xi$ is
\begin{align}
  \Phi_\xi^W(x ; \theta) &= \begin{cases} 
\frac{\sigma^2}{\xi^3} \left( \frac{1}{2}t^2 \log t - \frac{3}{4}t^2 + t - \frac{\xi}{2} M_f'(2\xi)+\frac{3}{4}M_f(2\xi)-M_f(\xi)\right), & \text{if } \xi \neq 0, \\
 \frac{\sigma^2}{6}\left(\left(\frac{x-\mu}{\sigma}\right)^3-M_f'''(0)\right), & \text{if } \xi = 0,
  \end{cases}\label{wscore_of_xi}
\end{align}
where $t = 1 + \xi \frac{x - \mu}{\sigma}$ and $M_f'(w)$ and $M_f'''(w)$ denote the first and third derivatives of the moment generating function, respectively.

\end{thm}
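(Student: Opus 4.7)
In one dimension the defining PDE \eqref{conti_eq} is an ODE in $x$, and integrating once from the left endpoint of $\mathrm{supp}\,p_\theta$ (at which both sides vanish) gives the first-order identity
\begin{align*}
  p(x;\theta)\,\partial_x\Phi_i^W(x;\theta) \;=\; -\,\frac{\partial P_\theta(x)}{\partial \theta^i}.
\end{align*}
For each parameter the plan is therefore: (i) compute $\partial_{\theta^i}P_\theta$ from the closed form $P_\theta(x)=F(\xi^{-1}\log t)$, where $t=1+\xi(x-\mu)/\sigma$; (ii) divide by $p(x;\theta)=f(\xi^{-1}\log t)/(\sigma t)$ and integrate in $x$; (iii) fix the additive constant using \eqref{normal_condi} together with Corollary \ref{cor_mom}.

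\textbf{Location and scale scores.} Using the chain rule on $P_\theta(x)=F(\xi^{-1}\log t)$, the $\mu$ and $\sigma$ derivatives telescope neatly to $\partial_\mu P_\theta=-p(x;\theta)$ and $\partial_\sigma P_\theta=-(x-\mu)/\sigma\cdot p(x;\theta)$, so $\partial_x\Phi_\mu^W=1$ and $\partial_x\Phi_\sigma^W=(x-\mu)/\sigma$. Integrating in $x$ and centering by the relevant expectation produces \eqref{wscore_of_mu} and \eqref{wscore_of_sigma}, which reassuringly match the location-scale formulas \eqref{score_mu}--\eqref{score_sigma}.

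\textbf{Shape score, $\xi\neq 0$.} From $\partial_\xi t=(t-1)/\xi$, the chain rule yields
\begin{align*}
  \frac{\partial P_\theta}{\partial \xi}=\frac{f(\xi^{-1}\log t)}{\xi^2}\Bigl(1-\tfrac{1}{t}-\log t\Bigr),
\end{align*}
and dividing by $p$ gives $\partial_x\Phi_\xi^W=\sigma\xi^{-2}(t\log t-t+1)$. Changing variables via $dx=(\sigma/\xi)\,dt$ and using $\int t\log t\,dt=\tfrac12 t^2\log t-\tfrac14 t^2$ produces the $t$-dependent bracket in \eqref{wscore_of_xi} up to an additive constant. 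To pin down that constant I apply Corollary \ref{cor_mom} with $T=1+\xi(X-\mu)/\sigma$, which supplies $\mathbb{E}_\theta[T^2\log T]=\xi M_f'(2\xi)$, $\mathbb{E}_\theta[T^2]=M_f(2\xi)$, $\mathbb{E}_\theta[T]=M_f(\xi)$; imposing \eqref{normal_condi} then combines these into the three moment-generating-function terms in \eqref{wscore_of_xi}.

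\textbf{The $\xi=0$ case and the main obstacle.} I treat $\xi=0$ by taking the limit of the previous calculation. Setting $u=(x-\mu)/\sigma$ and $t=1+\xi u$, Taylor expansion gives $1-1/t-\log t=-\tfrac12\xi^2 u^2+O(\xi^3)$, so the $\xi^{-2}$ cancels and $\partial_x\Phi_\xi^W\to(x-\mu)^2/(2\sigma)$, which integrates to $(x-\mu)^3/(6\sigma)$; centering by the third moment $\mathbb{E}_\theta[(X-\mu)^3]=\sigma^3 M_f'''(0)$ (valid at $\xi=0$ since then $X=\mu+\sigma Z$) recovers the second branch of \eqref{wscore_of_xi}. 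The $C^4$ regularity assumption in Section \ref{Location-scale-shape models} is precisely what legitimizes this Taylor expansion and the smooth matching of the two branches. The main technical obstacle I anticipate is not algebraic but rather justifying the boundary term in the one-step integration of \eqref{conti_eq}: when $\xi\neq 0$ the support of $p_\theta$ has a parameter-dependent endpoint $\mu-\sigma/\xi$, and I would need to verify from the explicit form of $p_\theta$ that $p(x;\theta)\partial_x\Phi_i^W$ and $\partial_{\theta^i}P_\theta$ both vanish there for the relevant sign of $\xi$; everything else in the proof is a routine, if slightly lengthy, computation.
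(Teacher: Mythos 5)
Your proposal is correct, and the computations check out against the paper's formulas (the antiderivative $\int(t\log t - t + 1)\,dt=\tfrac12 t^2\log t-\tfrac34 t^2+t$, the centering constants via Corollary~\ref{cor_mom}, and the limit $\partial_x\Phi^W_\xi\to(x-\mu)^2/(2\sigma)$ as $\xi\to0$ all agree). However, your route is genuinely different in structure: you integrate the continuity equation once to obtain $p\,\partial_x\Phi^W_i=-\partial_{\theta^i}P_\theta$ and then \emph{derive} the score functions constructively from the closed form of the cumulative distribution function, whereas the paper takes the candidate $\Phi^W_\xi$ as given and \emph{verifies} by direct differentiation that $\partial_x\bigl(p\,\partial_x\Phi^W_\xi\bigr)=-\partial_\xi p$ pointwise, using the change of variables to $t$ and treating $\xi=0$ by expanding the full score function rather than its $x$-derivative. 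Your approach has the advantage of explaining where the formula comes from and of reusing the same $\partial_\xi P_\theta$ computation that appears in the paper's Proposition~\ref{prop_cum}; its cost is exactly the issue you flag, namely justifying the vanishing of the flux $p\,\partial_x\Phi^W_i$ at the parameter-dependent left endpoint of the support (nontrivial, e.g., for the generalized Pareto case where $p$ does not vanish at $x=\mu$, although $\partial_x\Phi^W_i$ does there since $t=1$), and the interchange of $\partial_{\theta^i}$ with the integral defining $P_\theta$. The paper's pointwise verification sidesteps both issues but offers no derivation. Since you correctly identify and isolate the one step requiring additional justification, and everything else is sound, I consider the proposal an acceptable alternative proof once that boundary verification is written out.
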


\begin{rem}
From this result, it follows that the Wasserstein score function $\Phi^W_\xi$ for the shape parameter $\xi$ reflects information about the moments of order three or higher of $f$, and in particular, in the neighborhood of $\xi=0$ it conveys information about the third moment.
\end{rem}

\begin{rem}
The Fisher  score function $\Phi^F_i(x;\theta)=\frac{\partial \log p(x; \theta)}{\partial \theta^i}$ depends strongly on $f$, whereas the Wasserstein score function, apart from a constant term, does not depend on $f$.
\end{rem}

\begin{proof}
Since $\mathcal{S}_f$ becomes a location-scale model when $\xi$ is fixed, and noting that $\xi$ is fixed in the partial differential equations for $\Phi^W_\mu$ and $\Phi^W_\sigma$, equations \eqref{wscore_of_mu} and \eqref{wscore_of_sigma} follow from the corresponding results \eqref{score_mu} and \eqref{score_sigma} for the Wasserstein score functions in the location-scale model.

Next, for $\xi\neq0$ we verify by direct computation that $\Phi^W_\xi$ is a solution to the differential equation
\begin{align}\label{PDE}
  \frac{\partial}{\partial x}\qty(p(x;\theta){\del\Phi^W_\xi\over\del x})=-\frac{\partial}{\partial \xi}p(x;\theta).
\end{align}
First, the derivative of $\Phi^W_\xi$ with respect to $x$ is
\begin{align*}
  \frac{\partial}{\partial x}\Phi^W_\xi(x;\theta)
  =& \frac{\partial t}{\partial x}\frac{\partial}{\partial t}\Phi^W_\xi(t;\theta)\\
  =& \frac{\sigma}{\xi^2}(t\log t-t+1).
\end{align*}

Note that for $\xi\neq0$, $p(x;\theta)$ can be written as a function of $t$ by
\begin{align*}
  p(t;\theta)=\frac{1}{\sigma t}f\left(\log t^{1/\xi}\right).
\end{align*}
Denoting by $f'(z)$ the derivative of $f(z)$, the left-hand side of \eqref{PDE} becomes
\begin{align*}
  \frac{\partial}{\partial x}\qty(p(x;\theta){\del\Phi^W_\xi\over\del x})
  =& \frac{\partial t}{\partial x}\frac{\partial}{\partial t}\left(\frac{1}{\xi^2}\Bigl(\log t-1+\frac{1}{t}\Bigr)f\left(\log t^{1/\xi}\right)\right)\\
  =& \frac{1}{\sigma\xi t}\left(\left(1-\frac{1}{t}\right)f\left(\log t^{1/\xi}\right)+\frac{1}{\xi}\left(\log t-1+\frac{1}{t}\right)f'\left(\log t^{1/\xi}\right)\right).
\end{align*}
On the other hand,
\begin{align*}
  \frac{\partial}{\partial \xi}p(x;\theta)
  =& \frac{\partial t}{\partial \xi}\frac{\partial}{\partial t}p(t;\theta)+\frac{\partial}{\partial \xi}p(t;\theta)\\
  =& \frac{t-1}{\xi}\cdot \frac{1}{\sigma}\left(-\frac{1}{t^2}f\left(\log t^{1/\xi}\right)+\frac{1}{\xi t^2}f'\left(\log t^{1/\xi}\right)\right)
  -\frac{1}{\sigma\xi^2 t}\log t\,f\left(\log t^{1/\xi}\right)\\
  =& -\frac{1}{\sigma\xi t}\left(\left(1-\frac{1}{t}\right)f\left(\log t^{1/\xi}\right)+\frac{1}{\xi}\left(\log t-1+\frac{1}{t}\right)f'\left(\log t^{1/\xi}\right)\right).
\end{align*}
Therefore, we obtain
\begin{align*}
  \frac{\partial}{\partial x}p(x;\theta){\del\Phi^W_\xi\over\del x}
  = -\frac{\partial}{\partial \xi}p(x;\theta).
\end{align*}

Next, for $\xi\neq0$ we show that $\mathbb{E}[\Phi^W_\xi]=0$. Let $Z\sim f(z)$, $X\sim p(x;\theta)$, and $T=1+\xi\frac{X-\mu}{\sigma}$, then by corollary \ref{cor_mom} we have
\begin{align*}
  \mathbb{E}\Bigl[\frac{T^2}{2} \log T - \frac{3 T^2}{4} + T\Bigr]
  = \frac{\xi}{2} M_f'(2\xi)-\frac{3}{4}M_f(2\xi)+M_f(\xi).
\end{align*}

For $\xi=0$, it suffices to show that the limit as $\xi\to0$ coincides with \eqref{wscore_of_xi}. To avoid complications, let $a=\frac{x-\mu}{\sigma}$. Then, by the Taylor expansion of the logarithm,
\begin{align*}
  \log (1+a\xi)=a\xi-\frac{a^2}{2}\xi^2+\frac{a^3}{3}\xi^3+O(\xi^4), \quad (\xi\to0),
\end{align*}
so that
\begin{align*}
\frac{1}{2} t^2\log t
  = \frac{1}{2}(1+a\xi)^2\log (1+a\xi)
  = \frac{a}{2}\xi+\frac{3}{4}a^2\xi^2+\frac{a^3}{6}\xi^3+O(\xi^4).
\end{align*}
Therefore,
\begin{align*}
  \frac{1}{2}t^2 \log t - \frac{3}{4}t^2 + t
  =& \frac{a}{2}\xi+\frac{3}{4}a^2\xi^2+\frac{a^3}{6}\xi^3
  -\frac{3}{4}(1+a\xi)^2+(1+a\xi)+O(\xi^4)\\
  =& \frac{1}{4}+\frac{1}{6}a^3\xi^3+O(\xi^4).
\end{align*}
On the other hand, since the moment generating function is four-times differentiable on $2\Xi$, by Taylor's theorem we have
\begin{align*}
  \frac{\xi}{2} M_f'(2\xi)
  =& \frac{1}{2}M_f'(0)\xi+M_f''(0)\xi^2+M_f'''(0)\xi^3+O(\xi^4),\\
  M_f(\xi)
  =& M_f(0)+M_f'(0)\xi+\frac{1}{2}M_f''(0)\xi^2+\frac{1}{6}M_f'''(0)\xi^3+O(\xi^4),\\
  \frac{3}{4}M_f(2\xi)
  =& \frac{3}{4}M_f(0)+\frac{3}{2}M_f'(0)\xi+\frac{3}{2}M_f''(0)\xi^2+M_f'''(0)\xi^3+O(\xi^4).
\end{align*}
Combining these, we obtain
\begin{align*}
    \frac{\xi}{2} M_f'(2\xi)-\frac{3}{4}M_f(2\xi)+M_f(\xi)
    =& \frac{1}{4}M_f(0)+\frac{1}{6}M_f'''(0)\xi^3+O(\xi^4)\\
    =& \frac{1}{4}+\frac{1}{6}M_f'''(0)\xi^3+O(\xi^4),
\end{align*}
where we used the fact that $M_f(0)=1$ by definition of the moment generating function.

Therefore,
\begin{align*}
  \Phi^W_\xi(x;\theta)
  =& \frac{\sigma^2}{\xi^3} \Bigl( \frac{1}{2}t^2 \log t - \frac{3}{4}t^2 + t - \frac{\xi}{2} M_f'(2\xi)+\frac{3}{4}M_f(2\xi)-M_f(\xi)\Bigr)\\
  =& \frac{\sigma^2}{\xi^3} \Bigl( \Bigl(\frac{1}{4}+\frac{1}{6}a^3\xi^3\Bigr)
  - \Bigl(\frac{1}{4}+\frac{1}{6}M_f'''(0)\xi^3\Bigr)
  +O(\xi^4)\Bigr)\\
  =& \frac{\sigma^2}{6}\Bigl(\Bigl(\frac{x-\mu}{\sigma}\Bigr)^3-M_f'''(0)\Bigr)+O(\xi).
\end{align*}
Hence, when $\xi=0$,
\begin{align*}
   \Phi^W_\xi(x;\theta)
   = \frac{\sigma^2}{6}\left(\Bigl(\frac{x-\mu}{\sigma}\Bigr)^3-M_f'''(0)\right).
\end{align*}
\end{proof}

\begin{thm}\label{thm_w_info_mat}
The Wasserstein information matrix of the location–scale–shape model $\mathcal{S}_f$ with respect to parametrization $\theta=(\mu,\sigma,\xi)$ is given as 

\begin{align}\label{w_info_mat_theta}
   \scriptstyle I^W(\theta)
   =&\begin{pmatrix}
   \scriptstyle1 & \scriptstyle{1\over\xi}(M_f(\xi)-1) & \scriptstyle\frac{\sigma}{\xi^2}(\xi M_f'(\xi)-M_f(\xi)+1) \\
   * & \scriptstyle{1\over\xi^2}(M_f(2\xi)-2M_f(\xi)+1) & \scriptstyle{\sigma\over\xi^3}\qty(
    \xi M_f'(2\xi)-\xi M_f'(\xi)-M_f(2\xi)+2M_f(\xi)-1
    )\\
   * & * &\scriptstyle {\sigma^2\over\xi^4}(
    \xi^2 M_f''(2\xi)-2\xi M_f'(2\xi)+2\xi M_f'(\xi)+M_f(2\xi)-2M_f(\xi)+1
    )
\end{pmatrix},
  \end{align}
  where we omit the lower-triangular entries since $I^W(\theta)$ is symmetric.
\end{thm}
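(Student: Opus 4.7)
The plan is to compute each of the six distinct entries of $I^W(\theta)$ directly from the second, ``gradient'' form of the definition \eqref{def_w_inf}:
\[
  I^W(\theta)_{ij} = \mathbb{E}_\theta\!\left[\partial_x\Phi^W_i(x;\theta)\,\partial_x\Phi^W_j(x;\theta)\right],
\]
which, since the sample space is one–dimensional, reduces the problem to expectations of products of scalar $x$-derivatives. The required derivatives are already available from Theorem~\ref{thm_wscore_functions} and its proof:
\[
  \partial_x\Phi^W_\mu = 1,\qquad \partial_x\Phi^W_\sigma = \frac{x-\mu}{\sigma} = \frac{T-1}{\xi},\qquad \partial_x\Phi^W_\xi = \frac{\sigma}{\xi^2}\bigl(T\log T - T + 1\bigr),
\]
where $T = 1 + \xi(x-\mu)/\sigma$ is the random variable introduced in \eqref{law_of_T}.

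Once these are substituted, each integrand becomes a polynomial in $T$ and $T\log T$, so every expectation reduces to a moment of the form $\mathbb{E}_\theta[T^r(\log T)^k]$ with $r\in\{0,1,2\}$ and $k\in\{0,1,2\}$, which Corollary~\ref{cor_mom} evaluates in closed form as $\xi^k M_f^{(k)}(r\xi)$. I would proceed through the entries in order of difficulty. The entry $I^W_{\mu\mu}=1$ is immediate. The entries $I^W_{\mu\sigma}$ and $I^W_{\sigma\sigma}$ follow at once from $\mathbb{E}[T]=M_f(\xi)$ and $\mathbb{E}[T^2]=M_f(2\xi)$ after rewriting $(x-\mu)/\sigma$ as $(T-1)/\xi$. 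The entry $I^W_{\mu\xi}$ additionally calls on $\mathbb{E}[T\log T]=\xi M_f'(\xi)$, while $I^W_{\sigma\xi}$ is obtained by expanding $(T-1)(T\log T - T + 1)$ into four monomials and invoking the further moment $\mathbb{E}[T^2\log T]=\xi M_f'(2\xi)$. The overall $\sigma^a/\xi^b$ prefactors appearing in \eqref{w_info_mat_theta} come simply from collecting the prefactors of $\partial_x\Phi^W_\sigma$ and $\partial_x\Phi^W_\xi$.

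The main obstacle is the $(\xi,\xi)$ entry, where one must expand the square $(T\log T - T + 1)^2$ into the six monomials $T^2(\log T)^2$, $T^2\log T$, $T\log T$, $T^2$, $T$, $1$, and apply Corollary~\ref{cor_mom} term by term; the new ingredient is $\mathbb{E}[T^2(\log T)^2]=\xi^2 M_f''(2\xi)$. Careful bookkeeping of signs and powers of $\xi$ should then deliver the compact expression in the $(3,3)$–slot of \eqref{w_info_mat_theta}. The hypothesis that $M_f$ exists on $2\Xi$ is exactly what is required for Corollary~\ref{cor_mom} to be invoked with $r\in\{1,2\}$ and $k\le 2$, since a moment generating function is analytic on the interior of its domain of existence. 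Finally, although \eqref{w_info_mat_theta} is written through expressions that are individually singular at $\xi=0$, the smoothness of the Wasserstein scores in $\xi$ already established in Theorem~\ref{thm_wscore_functions} guarantees that each entry extends continuously to $\xi=0$; a Taylor expansion in $\xi$ analogous to the one at the end of the proof of Theorem~\ref{thm_wscore_functions} would verify this explicitly if desired.
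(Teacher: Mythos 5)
Your proposal is correct and follows essentially the same route as the paper's own proof in the Appendix: compute $\partial_x\Phi^W_\mu$, $\partial_x\Phi^W_\sigma$, $\partial_x\Phi^W_\xi$ from Theorem~\ref{thm_wscore_functions}, rewrite everything in terms of $T$, and evaluate each entry via Corollary~\ref{cor_mom}, treating $\xi=0$ by continuity. No gaps.
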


\begin{proof}
  It follows from direct computation using 
  corollary \ref{cor_mom} and
  theorem \ref{thm_wscore_functions} (see Appendix for details).
\end{proof}

\begin{ex}
In the generalized extreme value distribution, by Theorem \ref{thm_w_info_mat} and $M_f(t)=\Gamma(1-t)$, the Wasserstein information matrix is
\begin{align}
   \hspace{-0.5cm} \scriptstyle I^W(\theta)
   =\begin{pmatrix}
   \scriptstyle1 & \scriptstyle{1\over\xi}(\Gamma(1-\xi)-1) & \scriptstyle\frac{\sigma}{\xi^2}(1-\xi\Gamma'(1-\xi)-\Gamma(1-\xi)) \\
   * & \scriptstyle{1\over\xi^2}(\Gamma(1-2\xi)-2\Gamma(1-\xi)+1) & \scriptstyle{\sigma\over\xi^3}\qty(2\Gamma(1-\xi)-\Gamma(1-2\xi)+\xi\Gamma'(1-\xi)-\xi\Gamma'(1-2\xi)-1
    )\\
   * & * &\scriptstyle {\sigma^2\over\xi^4}(
    \xi^2 \Gamma''(1-2\xi)+2\xi \Gamma'(1-2\xi)-2\xi \Gamma'(1-\xi)+\Gamma(1-2\xi)-2\Gamma(1-\xi)+1
    )
\end{pmatrix}.
  \end{align}
\end{ex}

\begin{ex}
In the generalized Pareto distribution, by Theorem \ref{thm_w_info_mat} and $M_f(t)=\frac{1}{1-t}$, the Wasserstein information matrix is
\begin{align}
   I^W(\theta)
   =\begin{pmatrix}
   1 & \frac{1}{1-\xi} & \frac{\sigma}{(1-\xi)^2} \\
   * & \frac{2}{(1-\xi)(1-2\xi)} & \frac{\sigma(3-4\xi)}{(1-\xi)^2(1-2\xi)^2}\\
   * & * & \frac{\sigma^2(6-8\xi)}{(1-\xi)^2(1-2\xi)^3}
\end{pmatrix}.
\end{align}

\end{ex}

\section{Intrinsic and extrinsic Wasserstein geometry of location-scale-shape models}\label{Riemannian geometry of location-scale-shape models}

In this section, we study the Wasserstein geometry of location–scale–shape models.
First, we introduce new coordinates $\omega=(\alpha,\beta,\xi)$:
\begin{align}\label{def_omega}
\omega=(\mu+\sigma m_\xi,\ \sigma s_\xi,\ \xi),
\end{align}
where
\begin{align}
m_\xi:=&\bbe_{(0,1,\xi)}[X]={1\over\xi}\bigl(M_f(\xi)-1\bigr), \label{def_m}\\
s_\xi^2:=&V_{(0,1,\xi)}[X]={1\over\xi^2}\bigl(M_f(2\xi)-M_f(\xi)^2\bigr). \label{def_s}
\end{align}
The map $\theta\mapsto\omega$ is a diffeomorphism from $\mathbb{R}\times\mathbb{R}_{>0}\times\Xi$ onto itself, and its inverse transformation is given by
\begin{align}\label{inv_map}
\omega=(\alpha,\beta,\xi)\mapsto\qty(\alpha-{m_\xi\over s_\xi}\,\beta,\ {1\over s_\xi}\beta,\ \xi).
\end{align}
In the coordinates $\omega$ , for each fixed $\xi$, the associated location-scale model has mean $\alpha$ and variance $\beta^2$.

\begin{prop}\label{prop_w_info_mat_omega}
  The Wasserstein information matrix of the location–scale–shape model $\mathcal{S}_f$ with respect to parametrization $\omega=(\alpha,\beta,\xi)$ is given as 
  \begin{align}\label{w_info_mat_omega}
     I^W(\omega)=&\begin{pmatrix}
   1 & 0 & 0 \\
   0 & 1 & 0
   \\
   0 &0& \beta^2 \psi(\xi)
\end{pmatrix}
  \end{align}
  where $\psi(\xi)$ is a function only depends on $\xi$.
\end{prop}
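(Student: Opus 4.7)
The plan is to exploit the fact that in the $\omega$-coordinates, $\bbe_\omega[X] = \alpha$ and $V_\omega[X] = \beta^2$ do not depend on $\xi$, so the foliation $\{\mathcal{M}_{f_\xi}\}$ is already normalized to mean $0$ and variance $1$ along its fibers. This immediately handles the upper $2\times 2$ block: at each fixed $\xi$, $\{p(\cdot; (\alpha, \beta, \xi))\}_{\alpha,\beta}$ is the location-scale model $\mathcal{M}_{f_\xi}$ whose base density (at $\alpha=0,\beta=1$) has mean $0$ and variance $1$, so the discussion following \eqref{w_mat_lc} gives $I^W(\omega)_{\alpha\alpha} = I^W(\omega)_{\beta\beta} = 1$ and $I^W(\omega)_{\alpha\beta} = 0$. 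A direct tensor transformation from \eqref{wscore_of_mu} and \eqref{wscore_of_sigma} also yields the normalized scores $\Phi^W_\alpha(x;\omega) = x - \alpha$ and $\Phi^W_\beta(x;\omega) = [(x-\alpha)^2 - \beta^2]/(2\beta)$.

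For the $(\alpha,\xi)$ and $(\beta,\xi)$ entries, I would use the first form in \eqref{def_w_inf}, namely $I^W(\omega)_{a\xi} = \int \Phi^W_a(x;\omega)\,\partial_\xi p(x;\omega)\,dx$, and interchange differentiation with integration. This yields
\begin{align*}
I^W(\omega)_{\alpha\xi} = \partial_\xi \bbe_\omega[X - \alpha] = 0, \qquad I^W(\omega)_{\beta\xi} = \frac{1}{2\beta}\partial_\xi(V_\omega[X] - \beta^2) = 0,
\end{align*}
since $\bbe_\omega[X] = \alpha$ and $V_\omega[X] = \beta^2$ are $\xi$-independent. Geometrically, the loci $\{\alpha = \mathrm{const}\}$ and $\{\beta = \mathrm{const}\}$ are exactly the level sets of mean and variance, which is precisely the purpose of the $\omega$-reparametrization.

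For the $(\xi,\xi)$ entry I would use a scaling argument. Writing $p(x;\omega) = \beta^{-1}\,q((x-\alpha)/\beta;\xi)$, where $q(\cdot;\xi)$ denotes the density of the normalized variable $V(\xi) = (X-\alpha)/\beta$, and substituting $v = (x-\alpha)/\beta$ in the defining PDE \eqref{conti_eq} for $\Phi^W_\xi$, one obtains
\begin{align*}
\partial_v\bigl(q(v;\xi)\,\partial_v \Phi^W_\xi\bigr) = -\beta^2\, \partial_\xi q(v;\xi).
\end{align*}
Hence $\Phi^W_\xi(x;\omega) = \beta^2\,\tilde\Phi((x-\alpha)/\beta;\xi)$ for some $\tilde\Phi$ independent of $\beta$, so $\nabla_x \Phi^W_\xi = \beta\,(\partial_v\tilde\Phi)((x-\alpha)/\beta;\xi)$ and
\begin{align*}
I^W(\omega)_{\xi\xi} = \bbe_\omega\bigl[(\nabla_x \Phi^W_\xi)^2\bigr] = \beta^2\,\bbe_{V(\xi)}\bigl[(\partial_v\tilde\Phi)(V(\xi);\xi)^2\bigr] = \beta^2\,\psi(\xi).
\end{align*}

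The main technical point will be justifying the exchange of $\partial_\xi$ with the integral in the off-diagonal computations and extending the scaling identity continuously to $\xi = 0$; both should follow from the $C^4$-regularity of $f$ together with the explicit closed-form expressions in Theorem \ref{thm_wscore_functions}.
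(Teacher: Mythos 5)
Your argument is correct, but it proceeds quite differently from the paper's proof. The paper takes the explicit $3\times 3$ matrix $I^W(\theta)$ from Theorem \ref{thm_w_info_mat}, rewrites its entries in terms of $m_\xi$, $s_\xi$ and their derivatives, and then conjugates by the Jacobian $\partial\theta/\partial\omega$ of \eqref{inv_map}; the vanishing of the off-diagonal terms and the factor $\beta^2$ emerge from that matrix multiplication, and as a by-product one gets the closed form \eqref{psi} for $\psi$. You instead avoid Theorem \ref{thm_w_info_mat} entirely: you read off the normalized scores $\Phi^W_\alpha=x-\alpha$ and $\Phi^W_\beta=((x-\alpha)^2-\beta^2)/(2\beta)$ (which is a legitimate covector transformation, since the defining PDE \eqref{conti_eq} is linear in $\partial_{\theta^i}p$ and the normalization \eqref{normal_condi} is preserved), kill the off-diagonal entries by pairing these scores against $\partial_\xi p$ via the first expression in \eqref{def_w_inf} and noting that $\mathbb{E}_\omega[X]=\alpha$ and $\mathbb{E}_\omega[(X-\alpha)^2]=\beta^2$ are $\xi$-independent by construction of $\omega$, and obtain the $\beta^2\psi(\xi)$ form of the last entry by a scaling substitution in the PDE. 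This is more conceptual and makes transparent \emph{why} the block structure holds (the $\omega$-chart is exactly the mean--standard-deviation normalization along each leaf $\mathcal{M}_{f_\xi}$), and it generalizes beyond the specific family; what it does not deliver is the explicit formula \eqref{psi} for $\psi(\xi)$, which the paper needs later in the proof of Theorem \ref{thm_lsc_is_flat} (where $\int_I\sqrt{\psi}$ is integrated). The points you flag as remaining technicalities — differentiating under the integral sign despite the $\xi$-dependent support, the fact that the law of $(X-\alpha)/\beta$ depends only on $\xi$ (immediate from Lemma \ref{lem_rv}), and uniqueness of the score so that the scaling argument identifies $\Phi^W_\xi$ — are genuine but routine under the paper's standing assumptions, and the paper glosses over comparable issues itself.
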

\begin{proof}
First, we show that the $I^W(\theta)$ given by \eqref{w_info_mat_theta} can be rewritten using $m_\xi$ and $s_\xi^2$ as follows:
\begin{align}\label{info_mat_rewrighting}
  I^W(\theta)=\begin{pmatrix}
   1 & m_\xi & \sigma m'_\xi \\
   * & s_\xi^2+m_\xi^2 & \sigma(s_\xi s'_\xi+m_\xi m'_\xi)\\
   * & * & I_{33}
\end{pmatrix},
\end{align}
where $m'_\xi:=\frac{d}{d\xi}m_\xi$, $s_\xi:=\sqrt{s_\xi^2}$, and $s'_\xi:=\frac{d}{d\xi}s_\xi$.

Secondly,  differentiate \eqref{inv_map}, we obtain the Jacobi matrix \begin{align}\label{Jacobian}
  {\del \theta\over\del\omega}=\begin{pmatrix}
    1 & -{m_\xi\over s_\xi} & {m_\xi s'_\xi-m'_\xi s_\xi\over s_\xi^2}\beta\\
    0 & {1\over s_\xi} & -{s'_\xi\over s^2_\xi}\beta\\
    0 & 0 & 1
  \end{pmatrix}.
\end{align}
Hence combining \eqref{info_mat_rewrighting} and \eqref{Jacobian}, we have\begin{align*}
   &I^W(\omega)\\
  =&{\del \theta\over\del\omega}^\top I^W(\theta(\omega)) {\del \theta\over\del\omega}\\
  =&\begin{pmatrix}
    1 & 0 & 0\\
    {-m_\xi\over s_\xi} & {1\over s_\xi} & 0\\
    {(m_\xi s'_\xi- m'_\xi s_\xi)\beta\over s_\xi^2} & {-s'_\xi\beta\over s^2_\xi} & 1
  \end{pmatrix}
  \begin{pmatrix}
   1 & m_\xi & \sigma m'_\xi \\
   * & s_\xi^2+m_\xi^2 & \sigma(s_\xi s'_\xi+m_\xi m'_\xi)\\
   * & * &I_{33}
\end{pmatrix}
\begin{pmatrix}
    1 & {-m_\xi\over s_\xi} & {(m_\xi s'_\xi- m'_\xi s_\xi)\beta\over s_\xi^2}\\
    0 & {1\over s_\xi} & {-s'_\xi\beta\over s^2_\xi}\\
    0 & 0 & 1
  \end{pmatrix}\\
=&\begin{pmatrix}
   1 & 0 & 0 \\
   0 & 1 & 0
   \\
   0 &0& I_{33}
   -{{s'_\xi}^2+ {m'_\xi}^2\over s_\xi^2}\beta^2
\end{pmatrix},
\end{align*}
where the last equality holds because $\sigma={1\over s_\xi}\beta$. Recall that\begin{align*}
  I_{33}=&{\sigma^2\over\xi^4}(
    \xi^2 M_f''(2\xi)-2\xi M_f'(2\xi)+2\xi M_f'(\xi)+M_f(2\xi)-2M_f(\xi)+1
    )\\
    =&{\beta^2\over\xi^4s_\xi^2}(
    \xi^2 M_f''(2\xi)-2\xi M_f'(2\xi)+2\xi M_f'(\xi)+M_f(2\xi)-2M_f(\xi)+1
    ),
\end{align*}
we conclude\begin{align*}
  I^W(\omega)=&\begin{pmatrix}
   1 & 0 & 0 \\
   0 & 1 & 0
   \\
   0 &0& \beta^2 \psi(\xi)
\end{pmatrix}
\end{align*}
where\begin{align}\label{psi}
  \psi(\xi)={1\over s_\xi^2}\qty({\xi^2 M_f''(2\xi)-2\xi M_f'(2\xi)+2\xi M_f'(\xi)+M_f(2\xi)-2M_f(\xi)+1\over\xi^4}-({s'_\xi}^2+ {m'_\xi}^2)).
\end{align} 
\end{proof}

To restate Proposition \ref{prop_w_info_mat_omega} in geometric terms, we introduce the following definition.
Let $(M,g)$ and $(N,h)$ be Riemannian manifolds, and let $\varphi\in C^\infty(M)$. Define a Riemannian metric $G$ on $M\times N$ by
\begin{align*}
    G=\pi_M^* g+\bigl(\varphi\circ \pi_M\bigr)^2\,\pi_N^* h,
\end{align*}
where $\pi_M$ and $\pi_N$ denote the natural projections from $M\times N$ to $M$ and $N$, respectively, and ${}^*$ denotes the pullback. Then $(M\times N, G)$ is called the warped product of $(M,g)$ and $(N,h)$ by $\varphi$. We often omit $G$ and write $M\times_\varphi N$.
Warped products often appear in differential geometry related to general relativity; for details, see, for example, \cite{O'neil}.

With this terminology, Proposition \ref{prop_w_info_mat_omega} can be restated as follows.

\begin{cor}\label{cor_warped_prod}
Define a function $\varphi:\bbr\times\bbr_{>0}\to\bbr$ by $\varphi(\alpha,\beta)=\beta$. Then the Riemannian manifold $(\mathcal{S}_f,g_W)$ is isometric to the warped product
$(\bbr\times\bbr_{>0})\times_\varphi \Xi$ of $(\bbr\times\bbr_{>0},g_{\bbr^2})$ and $(\Xi,g_\bbr)$ with respect to a function $\varphi$.
\end{cor}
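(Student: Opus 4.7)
The plan is to read the warped product structure directly off Proposition \ref{prop_w_info_mat_omega} after a one-dimensional reparametrization of the shape coordinate. In coordinates $\omega=(\alpha,\beta,\xi)$ the Wasserstein metric has the block form $g_W=d\alpha^2+d\beta^2+\beta^2\psi(\xi)\,d\xi^2$, while the target warped product $(\bbr\times\bbr_{>0})\times_\varphi N$ with $\varphi(\alpha,\beta)=\beta$ and Euclidean metrics on both factors has the form $G=d\alpha^2+d\beta^2+\beta^2\,d\eta^2$, where $\eta$ is the coordinate on $N\subset\bbr$. Matching the two therefore reduces to replacing $\psi(\xi)\,d\xi^2$ by $d\eta^2$ for a suitable new shape coordinate $\eta$.

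Concretely, I would first note that $\psi(\xi)>0$ throughout $\Xi$, since $g_W$ is a (positive definite) Riemannian metric and its $\xi\xi$-block equals $\beta^2\psi(\xi)$ with $\beta>0$. Fixing any base point $\xi_0\in\Xi$, I would then define
$$\eta(\xi):=\int_{\xi_0}^{\xi}\sqrt{\psi(u)}\,du,$$
which is a smooth diffeomorphism from $\Xi$ onto an open interval $\eta(\Xi)\subset\bbr$, with $d\eta=\sqrt{\psi(\xi)}\,d\xi$. Substituting gives $\beta^2\psi(\xi)\,d\xi^2=\beta^2\,d\eta^2$, so in the coordinates $(\alpha,\beta,\eta)$ the metric $g_W$ takes exactly the warped product form $\pi_M^*g_{\bbr^2}+(\varphi\circ\pi_M)^2\,\pi_N^*g_\bbr$ with $M=\bbr\times\bbr_{>0}$ and $N=\eta(\Xi)$. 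Composing with the parameter change \eqref{def_omega}, the diffeomorphism $\mathcal{S}_f\ni p(x;\theta)\mapsto(\alpha,\beta,\eta(\xi))\in(\bbr\times\bbr_{>0})\times_\varphi\eta(\Xi)$ is then the desired Riemannian isometry onto its image.

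The main obstacle is essentially bookkeeping rather than mathematical difficulty: Proposition \ref{prop_w_info_mat_omega} has already done the hard work by exhibiting the block-diagonal, $\alpha$- and $\xi$-independent shape of $I^W(\omega)$, and what remains is the well-definedness of $\eta$ (which is the content of $\psi>0$) and the observation that, strictly speaking, the second factor of the warped product should be $\eta(\Xi)$ rather than $\Xi$ itself. The latter is a harmless identification since $\eta:\Xi\to\eta(\Xi)$ is a diffeomorphism of open intervals and the resulting warped products are manifestly isometric; thus the statement of Corollary \ref{cor_warped_prod} is recovered modulo this reparametrization of the shape axis.
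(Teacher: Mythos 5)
Your proposal is correct and matches the paper's (implicit) argument: the paper offers no separate proof, presenting the corollary as a direct restatement of Proposition~\ref{prop_w_info_mat_omega}, and the reparametrization $\eta(\xi)=\int_{\xi_0}^{\xi}\sqrt{\psi(u)}\,\dd u$ that you introduce is exactly the substitution the paper itself performs later in the proof of Theorem~\ref{thm_lsc_is_flat}. Your observation that the second factor should strictly be $\eta(\Xi)$ with the Euclidean metric (equivalently, $\Xi$ equipped with $\psi(\xi)\,\dd\xi^2$) is a fair remark about the precision of the statement rather than a gap in your argument.
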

Finally, we establish the flatness of location-scale-shape model.
\begin{thm}\label{thm_lsc_is_flat}
The Riemannian curvature of the Riemannian manifold $(\mathcal{S}_f,g_W)$ vanishes.
\end{thm}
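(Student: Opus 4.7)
The plan is to exhibit an explicit isometry from $(\mathcal{S}_f, g_W)$ onto an open subset of Euclidean $\bbr^3$, so that vanishing of the Riemann tensor follows without any component-by-component calculation. By Proposition~\ref{prop_w_info_mat_omega}, in the coordinates $\omega = (\alpha, \beta, \xi)$ the metric reads
\[
g_W = d\alpha^2 + d\beta^2 + \beta^2\,\psi(\xi)\,d\xi^2.
\]
The first point to establish is that $\psi(\xi) > 0$ on $\Xi$, which is forced by positive-definiteness of $g_W$; smoothness of $\psi$ follows from $f \in C^4(\bbr)$ via the explicit expression~\eqref{psi}. This allows me to define
\[
\eta(\xi) = \int_{0}^{\xi} \sqrt{\psi(u)}\,du,
\]
which is a smooth diffeomorphism from $\Xi$ onto an open interval $\tilde\Xi \subset \bbr$. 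In the coordinates $(\alpha, \beta, \eta)$ the metric simplifies to $g_W = d\alpha^2 + d\beta^2 + \beta^2\,d\eta^2$.

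The second step is to recognize $d\beta^2 + \beta^2\,d\eta^2$ as the polar form of the Euclidean metric on $\bbr^2$. Setting $u = \beta\cos\eta$ and $v = \beta\sin\eta$, a direct check gives $du^2 + dv^2 = d\beta^2 + \beta^2\,d\eta^2$, so in the coordinates $(\alpha, u, v)$ one obtains
\[
g_W = d\alpha^2 + du^2 + dv^2.
\]
This realizes $(\mathcal{S}_f, g_W)$ as locally isometric to an open subset of Euclidean $\bbr^3$. Since the Riemann curvature tensor is a local invariant of the metric, it vanishes identically on $\mathcal{S}_f$, which is the claim.

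The only non-routine point I expect is justifying the positivity and regularity of $\psi$; although the excerpt does not state $\psi > 0$ explicitly, this is forced by the fact that $g_W$ is a genuine Riemannian metric on $\mathcal{S}_f$, and smoothness is transparent from~\eqref{psi}. A conceptually cleaner alternative, which avoids introducing the coordinate $\eta$ by hand, would be to invoke the standard warped-product curvature formulas (see, e.g., O'Neill): the base $\bbr\times\bbr_{>0}$ with the flat Euclidean metric has zero curvature, the fibre $\Xi$ is one-dimensional and hence automatically flat, and one would need to check that the curvature terms involving the warping function $\varphi(\alpha,\beta)=\beta$ cancel. The coordinate-based argument sketched above is shorter and additionally has the virtue of locating $\mathcal{S}_f$ concretely inside $\bbr^3$.
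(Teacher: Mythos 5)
Your proposal is correct and follows essentially the same route as the paper: reparametrize the shape direction by $\eta=\int\sqrt{\psi}$, recognize $d\beta^2+\beta^2\,d\eta^2$ as polar coordinates, and pass to Cartesian coordinates to make the metric the identity. The only (cosmetic) difference is that the paper restricts to subintervals $I\subset\Xi$ with $\int_I\sqrt{\psi(\xi)}\,\dd\xi<2\pi$ so that each chart is injective, whereas you work with a local isometry throughout — which is equally sufficient since curvature is a local invariant, and your explicit remark on the positivity of $\psi$ is a point the paper leaves implicit.
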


\begin{rem}
    In the proof below, we construct an affine coordinate system in which the metric becomes the identity matrix. If one is familiar with the computation of Riemannian curvature for warped products, one can readily verify directly that the curvature vanishes.
\end{rem}

\begin{proof}
To prove flatness, it is enough to have, locally, a coordinate system in which the metric is the identity matrix.
Let $\psi(\xi)$ be as defined in \eqref{psi}.
Choose an open interval $I$ with $I\subset\Xi$ and $\int_I \sqrt{\psi(\xi)}\,\dd\xi<2\pi$. 
Set $U=\bbr\times\bbr_{>0}\times I$. Define a coordinate transformation from $(\alpha,\beta,\xi)\in U$ to new local coordinates $(u,v,w)$
by
\begin{align*}
  (u,v,w):=\bigl(\alpha,\ \beta\cos\theta(\xi),\ \beta\sin\theta(\xi)\bigr),
\end{align*}
where
\begin{align*}
  \theta(\xi):=\int_{\inf I}^{\xi}\sqrt{\psi(x)}\,\dd x.
\end{align*}

Then we have
\begin{align*}
  \frac{\partial(u,v,w)}{\partial(\alpha,\beta,\xi)}
=\begin{pmatrix}
1 & 0 & 0\\
0 & \cos\theta(\xi) & -\,\beta\,\sqrt{\psi(\xi)}\sin\theta(\xi)\\
0 & \sin\theta(\xi) & \;\;\beta\,\sqrt{\psi(\xi)}\cos\theta(\xi)
\end{pmatrix},
\end{align*}
and it immediately yields
\begin{align*}
\frac{\partial(u,v,w)}{\partial(\alpha,\beta,\xi)}^\top
\frac{\partial(u,v,w)}{\partial(\alpha,\beta,\xi)}
=I^W(\omega).
\end{align*}
Consequently, we obtain \begin{align*}
  I^W(u,v,w)=\frac{\partial(\alpha,\beta,\xi)}{\partial(u,v,w)}^\top I^W(\omega)\frac{\partial(\alpha,\beta,\xi)}{\partial(u,v,w)}=\mathrm{diag}(1,1,1).
\end{align*}
Noting that $\Xi$ can be covered by such intervals $I$, flatness follows.

\end{proof}

Next, we discuss the extrinsic geometry of $\cals_f$. Here, extrinsic flatness means closedness under displacement interpolation in optimal transport theory, i.e., being totally geodesic with respect to the $\mathrm{L}^2$-Wasserstein distance.

Let $F$ be the cumulative distribution function of $f$.
For simplicity, assume that $F$ has an inverse $F^{-1}$; otherwise, interpret $F^{-1}$ as the quantile function.
The cumulative distribution function of $p_\theta\in\cals_f$ is given by
\begin{align*}
      &P_\theta(x) = \begin{cases} 
F\qty(\log\qty( 1 + \xi  \frac{x - \mu}{\sigma} )^{1\over\xi}), & \text{if } \xi \neq 0, \\
F({x-\mu\over\sigma}), & \text{if } \xi = 0,
\end{cases}
    \end{align*}
    Since $P^{-1}_\theta(x)={\sigma\over\xi}(\exp(\xi F^{-1}(x))-1)+\mu$, OT map from $p_{\theta_1}$ to $p_{\theta_2}$ is given by \begin{align*}
      T_{\theta_1\to\theta_2}(x)=&P_{\theta_2}^{-1}(P_{\theta_1}(x))\\
      =& {\sigma_2\over\xi_2}(\exp(\xi_2 F^{-1}\qty(F\qty(\log\qty( 1 + \xi_1  \frac{x - \mu_1}{\sigma_1} )^{1\over\xi_1})))-1)+\mu_2\\
      =& {\sigma_2\over\xi_2}\qty(( 1 + \xi_1  \frac{x - \mu_1}{\sigma_1} )^{\xi_2\over\xi_1}- (1-\xi_2  \frac{ \mu_2}{\sigma_2})
      ).
    \end{align*}
    Therefore, the $\mathrm{L}^2$-Wasserstein Geodesics from $p_{\theta_1}$ to $p_{\theta_2}$ is written as\begin{align}
      P_t(x)=&\qty((1-t)x+tT_{\theta_1\to\theta_2}(x))_\# P_{\theta_1}\quad (t\in [0,1])\nonumber \\
      =&P_{\theta_1}(((1-t)\cdot+tT_{\theta_1\to\theta_2}(\cdot))^{-1}(x))\label{geodesics}
    \end{align}
    where \begin{align*}
      (1-t)x+tT_{\theta_1\to\theta_2}(x)=&1+t{\sigma_2\over\xi_2}\qty(( 1 + \xi_1  \frac{x - \mu_1}{\sigma_1} )^{\xi_2\over\xi_1}- (1+\xi_2  \frac{ x- \mu_2}{\sigma_2})).
    \end{align*}
\begin{prop}\label{prop_geodesics}
  For $p_{\theta_1},p_{\theta_2}\in\cals_f$, $\qty{p_t}_{t\in(0,1)}$ defined by \eqref{geodesics} satisfy $p_t\in\cals_f$ if and only if $\xi_1=\xi_2$. 
\end{prop}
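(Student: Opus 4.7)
The plan is to use the one-dimensional description of the $\mathrm{L}^2$-Wasserstein geodesic via quantile functions: on $\mathbb{R}$, the displacement interpolation $p_t$ between $p_{\theta_1}$ and $p_{\theta_2}$ has quantile function
\[
P_t^{-1}(u)=(1-t)P_{\theta_1}^{-1}(u)+tP_{\theta_2}^{-1}(u),
\]
which is equivalent to \eqref{geodesics}. Substituting the closed form $P_\theta^{-1}(u)=\frac{\sigma}{\xi}(e^{\xi F^{-1}(u)}-1)+\mu$ already used in the derivation of \eqref{geodesics} (with the affine form $\sigma F^{-1}(u)+\mu$ when $\xi=0$) and letting $z=F^{-1}(u)$, I would obtain an explicit expression
\[
g_t(z):=P_t^{-1}(F(z))=(1-t)\tfrac{\sigma_1}{\xi_1}e^{\xi_1 z}+t\tfrac{\sigma_2}{\xi_2}e^{\xi_2 z}+C
\]
on an open subinterval of the interior of $\text{supp}\,f$, for some constant $C$ (with a linear term $z$ replacing an exponential in the boundary case $\xi_i=0$).

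The ``if'' direction is then immediate: when $\xi_1=\xi_2=\xi$, both endpoints lie in the location-scale model $\mathcal{M}_{f_\xi}\subset\mathcal{S}_f$, which is totally geodesic in $\mathcal{P}_2(\mathbb{R})$ by \cite{Gelbrich}; hence $p_t\in\mathcal{M}_{f_\xi}\subset\mathcal{S}_f$ for every $t\in(0,1)$.

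For the converse I will argue by contradiction. Suppose $\xi_1\neq\xi_2$ yet $p_t=p_{(\mu_t,\sigma_t,\xi_t)}\in\mathcal{S}_f$ for some $t\in(0,1)$. The same closed form forces $g_t(z)$ to take the shape $\tfrac{\sigma_t}{\xi_t}(e^{\xi_t z}-1)+\mu_t$ (or $\sigma_t z+\mu_t$ when $\xi_t=0$), so the logarithmic derivative $g_t''(z)/g_t'(z)$ must be the constant $\xi_t$. On the other hand, a direct computation from the explicit form gives
\[
\frac{g_t''(z)}{g_t'(z)}=\frac{(1-t)\sigma_1\xi_1 e^{\xi_1 z}+t\sigma_2\xi_2 e^{\xi_2 z}}{(1-t)\sigma_1 e^{\xi_1 z}+t\sigma_2 e^{\xi_2 z}},
\]
which is a convex combination of $\xi_1$ and $\xi_2$ whose weight $\bigl(1+\tfrac{t\sigma_2}{(1-t)\sigma_1}e^{(\xi_2-\xi_1)z}\bigr)^{-1}$ is strictly monotone in $z$ (since $t\in(0,1)$, $\sigma_i>0$, $\xi_1\neq\xi_2$). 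The ratio is therefore non-constant, a contradiction.

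The main care point I foresee is the bookkeeping in the corner cases where one of $\xi_1,\xi_2$ or $\xi_t$ vanishes, since the closed form for $P_\theta^{-1}$ changes shape at $\xi=0$. In each such sub-case the same logarithmic-derivative calculation still reduces the assumption $p_t\in\mathcal{S}_f$ to a nontrivial linear relation among members of $\{1,\,z,\,e^{\xi_1 z},\,e^{\xi_2 z}\}$ on an open interval, which is impossible whenever $\xi_1\neq\xi_2$ by the classical linear independence of exponentials. I expect no genuine technical difficulty beyond this case splitting.
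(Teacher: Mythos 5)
Your proposal is correct and follows essentially the same route as the paper: both reduce the geodesic to the affine interpolation of quantile functions $P_t^{-1}=(1-t)P_{\theta_1}^{-1}+tP_{\theta_2}^{-1}$, use total geodesy of location-scale models for the ``if'' direction, and then show that a combination of two distinct exponentials in $z=F^{-1}(u)$ cannot have the required single-exponential form. The only difference is cosmetic --- the paper invokes linear independence of $\{e^{\alpha z}\}$ directly while you argue via the non-constancy of $g_t''/g_t'$, and you are slightly more explicit about the $\xi=0$ corner cases.
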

\begin{proof}
If $\xi_1=\xi_2$, it follows from the fact that the location–scale model is totally geodesic with respect to the $\mathrm L^2$-Wasserstein distance.

Assume $p_t=p_{\theta_t}\in\mathcal{S}_f$ where $\theta_t=(\mu_t,\sigma_t,\xi_t)$. In this case, from \eqref{geodesics} and the definition of $T_{\theta_1\to\theta_2}$ we have
\begin{align*}
  P_t^{-1}(x)=&(1-t)P^{-1}_{\theta_1}(x) +tT_{\theta_1\to\theta_2}(P^{-1}_{\theta_1}(x))\\
  =& (1-t)P^{-1}_{\theta_1}(x) +tP^{-1}_{\theta_2}(x).
\end{align*}
Therefore,
\begin{align*}
  &\frac{\sigma_t}{\xi_t}\bigl(\exp(\xi_t F^{-1}(x))-1\bigr)+\mu_t\\
  =&(1-t)\left(\frac{\sigma_1}{\xi_1}\bigl(\exp(\xi_1 F^{-1}(x))-1\bigr)+\mu_1\right)
  +t\left(\frac{\sigma_2}{\xi_2}\bigl(\exp(\xi_2 F^{-1}(x))-1\bigr)+\mu_2\right).
\end{align*}
Here, from the linear independence of $\{e^{\alpha x}\}_{\alpha\in\bbr}$ it follows that $\xi_1=\xi_2$.
\end{proof}

\begin{cor}\label{cor_extrinsic_geom}
  The location–scale–shape model $\mathcal S_f$ is not a totally geodesic submanifold of $\mathcal P_2(\mathbb{R})$ with respect to the $\mathrm L^2$-Wasserstein distance.
\end{cor}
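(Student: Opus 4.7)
The plan is to obtain this corollary as an immediate consequence of Proposition \ref{prop_geodesics}. Recall that a submanifold $N\subset\mathcal{P}_2(\mathbb{R})$ is called totally geodesic with respect to the $L^2$-Wasserstein distance precisely when, for every pair of points in $N$, the displacement interpolation between them lies entirely in $N$. Proving the corollary therefore reduces to exhibiting two elements of $\mathcal{S}_f$ whose displacement interpolation leaves $\mathcal{S}_f$ at some intermediate time $t\in(0,1)$.

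To produce such a pair, I would first invoke the assumption that the shape-parameter space $\Xi\subset\mathbb{R}$ is an open set containing $0$, so it contains at least two distinct values; pick $\xi_1,\xi_2\in\Xi$ with $\xi_1\neq\xi_2$, and for arbitrary $\mu_i\in\mathbb{R}$ and $\sigma_i>0$ set $\theta_i=(\mu_i,\sigma_i,\xi_i)$. Proposition \ref{prop_geodesics} tells us that the displacement interpolation $\{p_t\}_{t\in(0,1)}$ from $p_{\theta_1}$ to $p_{\theta_2}$, defined as in \eqref{geodesics}, lies inside $\mathcal{S}_f$ only when $\xi_1=\xi_2$. Since this contradicts the choice $\xi_1\neq\xi_2$, there must exist some $t\in(0,1)$ with $p_t\notin\mathcal{S}_f$, which is exactly the failure of total geodesy.

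There is no real obstacle here, since Proposition \ref{prop_geodesics} already does the hard work of pinpointing exactly when the Wasserstein geodesic stays in the model. The only subtlety is logical: the proposition is an ``if and only if'' that includes the case $\xi_1=\xi_2$ positively, so I must contrapose carefully to extract that $\xi_1\neq\xi_2$ forces $\{p_t\}_{t\in(0,1)}\not\subset\mathcal{S}_f$. Once that is stated cleanly, the corollary is a one-line deduction.
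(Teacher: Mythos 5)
Your proof is correct and matches the paper's (implicit) argument: the corollary is stated as an immediate consequence of Proposition \ref{prop_geodesics}, obtained exactly as you describe by choosing $\xi_1\neq\xi_2$ (possible since $\Xi$ is open) and contraposing the equivalence. No further comment is needed.
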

Corollary \ref{cor_extrinsic_geom} means that, although the location–scale–shape model is intrinsically flat by Theorem \ref{thm_lsc_is_flat} , it has non-zero embedding curvature when embedded into the full space of probability measures $\mathcal P_2(\mathbb{R})$.
In the figure \ref{fig:pair-minipage} , we show the difference between the intrinsic geodesics based on the Wasserstein information matrix and the extrinsic geodesics based on displacement interpolation for the generalized extreme value (GEV) distribution.
On the intrinsic side, all displayed distributions are GEV distributions, whereas on the extrinsic side, except for the initial and final distributions, the intermediate ones are not GEV distributions.

\begin{figure}[tbp]
  \centering
  \begin{minipage}[t]{0.48\textwidth}
    \centering
    \includegraphics[width=\linewidth]{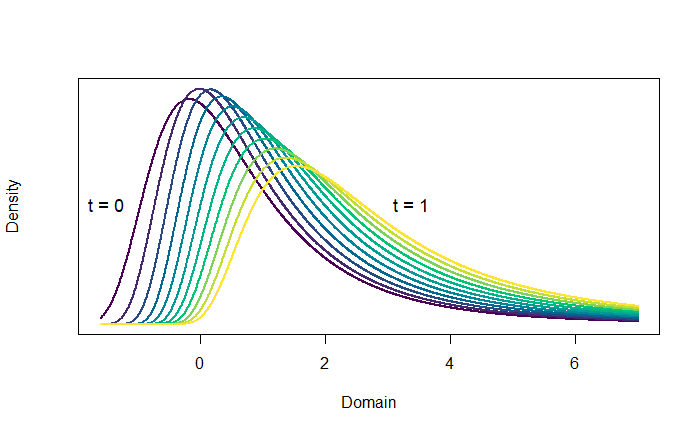}
  \end{minipage}
  \hfill
  \begin{minipage}[t]{0.48\textwidth}
    \centering
    \includegraphics[width=\linewidth]{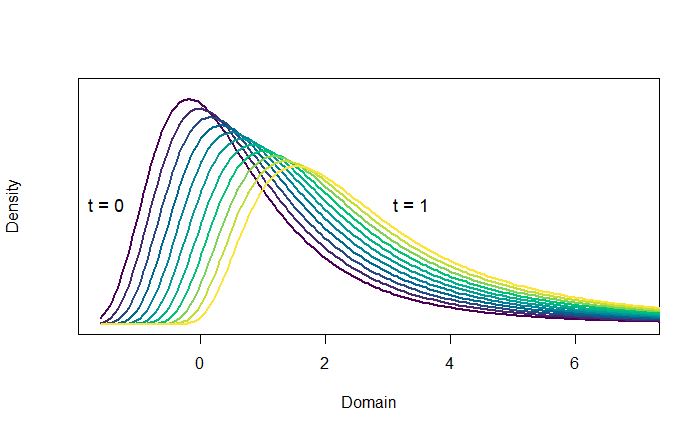}
  \end{minipage}
  \caption{Comparison of intrinsic (left) and extrinsic (right) geodesics from the purple distribution $\mathrm{GEV}(0,1,0.2)$, to the yellow distribution $\mathrm{GEV}(2,1.5,0.4)$.
}
  \label{fig:pair-minipage}
\end{figure}

\section{Discussion}\label{Discussions}

In this paper, we introduced location-scale-shape models and studied their Wasserstein geometry. We showed that, as in the location-scale models, the Wasserstein score functions do not depend on the base density $f$, and that the entries of the Wasserstein information matrix have simple expressions in terms of the moment-generating function of $f$. Although the parameterization combines location and scale parameters with a shape parameter, the resulting Wasserstein metric is not a direct product but rather a warped product. Moreover, the associated Riemannian manifold is flat. In contrast to the location-scale case, the model is not totally geodesic with respect to the $L^2$-Wasserstein distance.

Our analysis focused on geometric aspects and did not address the statistical properties of these models. For location-scale models, \cite{Amari and Matsuda2} studied the $Z$-estimator defined via the Wasserstein score function (Wasserstein estimator). In addition, \cite{Nishimori and Matsuda} showed that location-scale models form an $e$-geodesic in terms of the Wasserstein dual geometry proposed by \cite{Ay}, and discussed the relation between these geodesics and Wasserstein--Cramer--Rao  efficiency within the framework of \cite{Li and Zhao}. This result is analogous to the Cramer--Rao efficiency of the maximum likelihood estimator in exponential families. In location-scale models, the Wasserstein estimator admits a closed-form expression, which facilitates various derivations; by contrast, no such closed form is available for location-scale-shape models. It is an interesting future work to study the statistical properties of location-scale-shape models and their connections with Wasserstein geometry.

We restricted our attention to univariate models in this study; extension to the multivariate setting is left for future work. 
The Wasserstein geometry of the multivariate Gaussian model has been well elucidated \cite{Takatsu}.
This result can be extended to location-scatter models, a multivariate extension of the location-scale model, and Wasserstein statistics of the location-scatter model has been studied recently \cite{Amari and Matsuda2}. For location-scale-shape models, however, even an appropriate multivariate extension is nontrivial. We may be able to propose a natural multivariate extension from the perspective of Wasserstein geometry.

	\vskip 14pt
	\noindent {\large\bf Acknowledgements}
	Takeru Matsuda was supported by JSPS KAKENHI Grant Numbers 19K20220, 21H05205, 22K17865 and JST Moonshot Grant Number JPMJMS2024.
	
	\vskip 14pt
	
	\noindent {\large\bf Conflict of interest statement}
	On behalf of all authors, the corresponding author states that there is no conflict of interest. 
	
	\vskip 14pt
	
	\noindent {\large\bf Data availability statement}
	Not Applicable.
	
	\vskip 14pt

\begin{appendices}
\section{Proof of Theorem \ref{thm_w_info_mat}}
Below, for simplicity, we assume $\xi\neq0$. Note, however, that all functions that appear are continuous at $\xi=0$, so the limit $\xi\to\pm0$ can be taken. We then compute the Wasserstein information matrix for the location-scale-shape model $\mathcal{S}_f$.

By Theorem \ref{thm_wscore_functions}, the derivatives with respect to $x$ of the Wasserstein score functions in the location-scale-shape model are given by the following.

\begin{align*}
     {\del \over \del x} \Phi_\mu^W(x ; \theta) &= 1,\\
      {\del \over \del x}\Phi_\sigma^W(x ; \theta) &= \frac{x - \mu}{\sigma}\\
      &={t-1\over\xi},\\
      {\del \over \del x}\Phi_\xi^W(x ; \theta) &={\del t\over \del x}{\del \over \del t}\qty(\frac{\sigma^2}{\xi^3} ( \frac{t^2}{2} \log t - \frac{3 t^2}{4} + t)) \\
      &=\frac{\sigma}{\xi^2} \qty( t \log t -t + 1),
    \end{align*}
where $t=1+\xi{x-\mu\over\sigma}$.

Using \eqref{law_of_Z}, \eqref{law_of_T}
and Corollary \ref{cor_mom}, we can express the following in terms of the moment generating function $M_f(t)$:

\begin{align*}
      I^W(\theta)_{11}=&\mathbb{E}_\theta[1\cdot1]=1,\\
    I^W(\theta)_{12}=&\mathbb{E}_\theta\qty[1\cdot{1\over\xi}(T-1)]\\
    =& {1\over\xi}(M_f(\xi)-1), \\
    I^W(\theta)_{22}=&\mathbb{E}_\theta\qty[\qty({1\over\xi}(T-1))^2]\\
    =&{1\over\xi^2}(M_f(2\xi)-2M_f(\xi)+1),\\
    I^W(\theta)_{13}=&\mathbb{E}_\theta[\frac{\sigma}{\xi^2} \qty( T \log T -T + 1)]\\
    =&\frac{\sigma}{\xi^2}(\xi M_f'(\xi)-M_f(\xi)+1),\\
    I^W(\theta)_{23}=&\mathbb{E}_\theta\qty[({1\over\xi}(T-1)) (\frac{\sigma}{\xi^2} \qty( T \log T -T + 1))]\\
    =&{\sigma\over\xi^3}\qty(
    \xi M_f'(2\xi)-\xi M_f'(\xi)-M_f(2\xi)+2M_f(\xi)-1
    ),\\
      I^W(\theta)_{33}=&\mathbb{E}_\theta\qty[(\frac{\sigma}{\xi^2} \qty( T \log T -T + 1))^2]\\
    =&{\sigma^2\over\xi^4}(
    \xi^2 M_f''(2\xi)-2\xi M_f'(2\xi)+2\xi M_f'(\xi)+M_f(2\xi)-2M_f(\xi)+1
    ).
\end{align*}
Consequently, the Wasserstein information matrix of the location-scale-shape model is given by
\begin{align*}
   \scriptstyle I^W(\theta)
   =&\begin{pmatrix}
   \scriptstyle1 & \scriptstyle{1\over\xi}(M_f(\xi)-1) & \scriptstyle\frac{\sigma}{\xi^2}(\xi M_f'(\xi)-M_f(\xi)+1) \\
   * & \scriptstyle{1\over\xi^2}(M_f(2\xi)-2M_f(\xi)+1) & \scriptstyle{\sigma\over\xi^3}\qty(
    \xi M_f'(2\xi)-\xi M_f'(\xi)-M_f(2\xi)+2M_f(\xi)-1
    )\\
   * & * &\scriptstyle {\sigma^2\over\xi^4}(
    \xi^2 M_f''(2\xi)-2\xi M_f'(2\xi)+2\xi M_f'(\xi)+M_f(2\xi)-2M_f(\xi)+1
    )
\end{pmatrix}.
  \end{align*}\qed
\end{appendices}

\end{document}